\documentclass[a4paper, 11pt]{article}
\usepackage[T1]{fontenc}
\usepackage[utf8]{inputenc}
\usepackage{amsfonts}
\usepackage{textcomp}

\usepackage{enumerate} 
\usepackage{hyperref} 

\usepackage{amssymb}
\usepackage[leqno]{amsmath}
\usepackage{amsthm} 

\theoremstyle{definition}

\numberwithin{defn}{section}
\newtheorem{rmk}{Remark}
\numberwithin{rmk}{section}
\theoremstyle{plain}
\newtheorem{thm}{Theorem}
\numberwithin{thm}{section}
\newtheorem{prop}[thm]{Proposition}
\newtheorem{lemma}[thm]{Lemma}
\newtheorem{cor}[thm]{Corollary}

\usepackage[dvipsnames]{xcolor} 

\usepackage{tikz}
\usepackage{tikz-cd}


\usepackage{mathrsfs} 

\newcommand{\yo}{\text{\usefont{U}{min}{m}{n}\symbol{'110}}}
\DeclareFontFamily{U}{min}{}
\DeclareFontShape{U}{min}{m}{n}{<-> dmjhira}{}

\usepackage{xspace} 
\newcommand{\eg}{e.g.\@\xspace} 
\newcommand{\ie}{i.e.\@\xspace}

\mathchardef\hyphen="2D 

\renewcommand{\epsilon}{\varepsilon}
\renewcommand{\phi}{\varphi}

\newcommand{\dom}{\operatorname{dom}}
\newcommand{\cod}{\operatorname{cod}}

\newcommand{\Eq}{\operatorname{Eq}}

\newcommand{\AND}{\wedge}

\newcommand{\inv}{^{-1}}
\newcommand{\op}{^{\textup{op}}}

\newcommand{\ran}{\mathrm{ran}}

\newcommand{\colim}{\mathrm{colim}}

\newcommand{\sheafify}{\mathrm{a}}

\newcommand{\Set}{\mathbf{Set}}

\newcommand{\cbicat}{\mathcal{C}}
\newcommand{\dbicat}{\mathcal{D}}

\newcommand{\Etopos}{\mathscr{E}}
\newcommand{\Ftopos}{\mathscr{F}}

\newcommand{\powerset}{\mathscr{P}}

\usepackage{tikz}
\makeatletter
\newbox\xrat@below
\newbox\xrat@above
\newcommand{\xrightarrowtail}[2][]{%
	\setbox\xrat@below=\hbox{\ensuremath{\scriptstyle #1}}%
	\setbox\xrat@above=\hbox{\ensuremath{\scriptstyle #2}}%
	\pgfmathsetlengthmacro{\xrat@len}{max(\wd\xrat@below,\wd\xrat@above)+.6em}%
	\mathrel{\tikz [>->,baseline=-.75ex]
		\draw (0,0) -- node[below=-2pt] {\box\xrat@below}
		node[above=-2pt] {\box\xrat@above}
		(\xrat@len,0) ;}}
\makeatother

\date{August 22, 2019} 

\title{On the dependent product in toposes}
\author{Olivia Caramello \hspace{0.2cm} and \hspace{0.2cm} Riccardo Zanfa}

\begin{document}
	
	\maketitle{}

\begin{abstract}
	We give an explicit construction of the dependent product in an elementary topos, and a site-theoretic description for it in the case of a Grothendieck topos. 
\end{abstract}
	
\section*{Introduction}

Given a morphism $f:P\rightarrow Q$ in a topos $\Etopos$, the functor
$$\textstyle\prod_f:\Etopos/P\rightarrow \Etopos/Q$$
right adjoint to the pullback functor $f^*:\Etopos/Q\rightarrow \Etopos/P$ is called the \emph{dependent product}\footnote{For the name we refer to \cite{nlab:dependent_product}.} along $f$. In fact, among its numerous applications, this construction provides a sound interpretation of dependent products in type theory.

In the present note we provide a description for it different from those available in the literature, both in the setting of elementary toposes and in that of Grothendieck toposes.

Before presenting our construction, let us quickly revise the classical approaches to the problem. 

There is a standard strategy to prove the existence of the dependent product which recurs in most topos theory books:
\begin{enumerate}[(i)]
	\item one starts from the particular case $Q=1$, that is $f:P\rightarrow 1$, to build $\prod_{f}$;
	\item for a general $f$, the equivalence $(\Etopos/Q)/f\simeq \Etopos/P$ brings us back to item (i). 
\end{enumerate}
So the key part is the first, while the second only requires to translate the same process in a slice topos. 

Let us see how (i) is delivered in some standard references on the subject:
\begin{itemize}
	\item For $f:P\rightarrow 1$ and $h:H\rightarrow P$, $\prod_{f}(h)$ is the pullback of $h^P:H^P\rightarrow P^P$ along $1_\Etopos\rightarrow P^P$, the mate arrow of $1_P:P\rightarrow P$. 
	
	This approach can be found in \cite[Theorem I.9.4]{maclanemoerdijk}, \cite[Corollary A1.5.3]{elephant} and \cite[Theorem 11.7]{mclarty}. 
	
	\item An alternative method for proving the existence of $\prod_{f}$ for $f:P\rightarrow 1$ is based on the following fact: a logical functor between toposes (that is, a cartesian functor which preserves power objects, equivalently exponentials and the subobject classifier) has a left adjoint if and only if it has a right adjoint. So, in order to conclude that $(f:P\rightarrow 1)^*$ has a right adjoint, it suffices to show $(f:P\rightarrow 1)^*$ is logical and admits the forgetful functor $\Etopos/P\rightarrow \Etopos$ as its left adjoint. The second claim is obvious, while the first is established in \cite[Corollary 1.43]{johnstone.topos} by using partial arrow classifiers (see Definition 1.25 there), and in \cite[Theorem 5.8.4]{borceux3} and \cite[Chapter 5, Corollary 3.7]{barrwells} by using properties of the power-object functor $\powerset$ as a monadic functor.
	
	\item A description of $\prod_{f}$ in terms of the internal language can be found in \cite[Theorem 4.23]{bell}: for $h:H\rightarrow P$ and $f:P\rightarrow 1$ the dependent product $\prod_{f}(h)$ is defined syntactically as the object $\prod_{p\in P}h\inv(p)$.
\end{itemize}
For the sake of completeness we remark that in \cite[pag. 451]{goldblatt} the approach is slightly different, in that it exploits partial arrows in a way similar to \cite{johnstone.topos} but in a one-step explicit proof. Recall that a partial arrow from $A$ to $B$ is a span $(A\xleftarrow{g}D\xrightarrow{f} B)$, where $g$ is a monomorphism, and that every topos admits a classifier for partial arrows to an object $B$, that is, an arrow $\eta_{B}:B\to \tilde{B}$ such that for every partial arrow $(A\xleftarrow{g}D\xrightarrow{f}B)$ from $A$ to $B$ there is a unique arrow $\tilde{f}:A\to \tilde{B}$ such that the following square is a pullback:
\begin{equation*}
\begin{tikzcd}
D \ar[r, tail, "g"] \ar[d, "f"'] & A \ar[d, "\tilde{f}"]\\
B \ar[r, "\eta_{B}"] & \tilde{B}
\end{tikzcd}
\end{equation*}

The dependent product $\prod_f(h)$ is then constructed by taking the pullback of $\tilde{h}^P$ along the arrow $k':Q\rightarrow \tilde{P}^P$ whose mate $k:Q\times P\rightarrow \tilde{P}$ is the classifying arrow of the partial morphism  $(Q\times P\xleftarrow{\langle f,1\rangle}P\xrightarrow{1_P}P)$. 

These approaches, however, make it difficult to handle the dependent product concretely: as easy as the procedure in (i) can be, the cartesian closed structure in slice toposes is rather convoluted to describe, and this makes the translation in (ii) cumbersome. The same is true for alternative strategies that resort to the partial arrow classifier; furthermore, partial arrows are nowadays not very cared for in topos theory.

In the present note we provide an alternative way to describe the dependent product different from those listed above. We do not claim this to be \textquoteleft better' in any sense, but it has the perk of being a one-step construction directly relying on the two essential properties of an elementary topos: the existence of power objects and of finite limits. 

Section 1 treats the case where $\Etopos$ is any elementary topos. Our construction is based on the possibility, ensured by the ``boundedness'' of the operations involved in the construction, of modelling (the underlying object of) a dependent product as the domain of a subobject of a finite product of basic objects and powersets on them. This allows us to reduce the computation to that of the value of the $\forall$ functor at very simply defined subobjects, which is in turn shown to admit an elementary description in terms of power objects and finite limits. Our identification of the relevant subobjects stems from an analysis of $\prod_f$ in the topos $\Set$, and from the subsequent identification of a formula in the internal language of $\mathcal{E}$ defining the dependent product. Lastly, it is shown that the dependent product behaves very naturally with respect to subtoposes.

Section 2 analyses the case where $\Etopos$ is a Grothendieck topos: our description of the dependent product relies on representing the slice toposes ${\cal E}\slash P$ and ${\cal E}\slash Q$ as categories of sheaves on suitable sites such that the geometric morphism ${\cal E}\slash P \to {\cal E}\slash Q$ whose inverse image is $f^{\ast}$ is induced by a comorphisms between them. 

\subsection*{Notation}

We shall denote objects in slice toposes with square brackets to distinguish them from the underlying morphism: \ie $f\in \textup{Mor}(\Etopos)$ while $[f]\in \Etopos/Q$. 

We shall write, as is usual in the category-theoretic literature, $F \dashv G$ to mean that a functor $F$ is left adjoint to a functor $G$. 

The unique arrow from $P$ to the terminal $1_\Etopos$ will be denoted by $!_P$; when $Q=1_\Etopos$ we will write $P^*\dashv\prod_P$ instead of $(1_P)^*\dashv \prod_{!_P}$. The canonical geometric morphism $\Etopos \slash P \to \Etopos\slash Q$ induced by an arrow $f:P\to Q$, whose direct image is $\prod_f$ and whose inverse image is $f^*$, will be denoted by $l^{\cal E}_{f}$. If $Q=1_{\cal E}$ then we shall also write $l^{\cal E}_{P}$ in place of $l^{\cal E}_{!_P}$.

For any object $X$ of an elementary topos $\Etopos$, we shall denote, as in \cite{maclanemoerdijk}, by $ \{\cdot\}_X:X\rightarrow\powerset(X)$ the arrow whose mate $X\times X \to \Omega$ is the classifying arrow of the diagonal subobject $\Delta_X:X\rightarrowtail X\times X$. It operates as $x\mapsto\{x\}$ if $\Etopos=\Set$. 

In order to lighten notation, we shall sometimes omit subscripts $c$ for identity arrows $1_{c}$ when they can be unambigously inferred from the context.

To denote the Yoneda embedding $\cbicat\hookrightarrow[\cbicat\op,\Set]$ we will use the character $\yo$ (read ``yo''), the first ideogram of the name ``Yoneda'' in the hiragana Japanese alphabet.\footnote{For a reference on the usage of this notation, see \href{http://ncatlab.org/nlab/show/Yoneda+embedding\#ReferencesNotation}{Yoneda embedding in nLab}.}

\section{Dependent product in elementary toposes}
When working with sets the dependent product can be easily built by exploiting the equivalence $\Set/P\simeq \Set^P$: for a $P$-indexed set $B=\{B_p\}_{p\in P}$ the dependent product $\prod_f B$ is defined as $\{\prod_{f(p)=q} B_p \}_{q\in Q}$ (\cite[Theorem I.9.3]{maclanemoerdijk}). If we wish to go back to the slice topos $\Set/Q$, all we have to do is glue back the fibers into one single set. So, given $h:H\rightarrow P$, $\prod_f[h]$ is the set $\coprod_{q\in Q}\prod_{f(p)=q} h\inv(p)$,  with structural map the canonical projection to $Q$. 

We can attempt to generalize this construction to an arbitrary elementary topos by describing it with a suitable formula in the internal language. Notice that the above description contains set-indexed products and coproducts: as we shall see, what makes it possible to generalize it to any topos is the fact that all terms are ``bounded'', which allows us to represent them in terms of power objects and finitary products.  
 
For a set $W$ denote by $\powerset(W)$ its powerset: then for a family of sets $H_i \subseteq H$ indexed by $i \in I$ it holds that
\begin{align*}
\prod_{i\in I} H_i=\{& w\in \powerset(H\times I)\ |\ \forall\ i\in I\ \exists! \ x\in H ((x,i)\in w)\ \\&\hspace{2.1em}\AND\ \forall i\in I\ \forall\ x\in H ( (x,i)\in w\Rightarrow x\in H_i)   \}.
\end{align*}
In particular
\begin{align*}
\prod_{f(p)=q} h\inv(p)=\{& w\in \powerset(H\times f\inv(q))\ |\  \forall\ p\in f\inv(q)\ \exists!\ x\in H ((x,p)\in w), \\&\hspace{4.8em} \forall\ p\in f\inv(q)\ \forall x\in H ( (x,p)\in w \Rightarrow h(x)=p) )\}.
\end{align*}
Notice that the $w$ in the formula belongs to a set parametrized by $q$: since $f\inv(q)\subseteq P$, we can get rid of this dependence with the equivalent formulation
\begin{align*}
\prod_{f(p)=q} \hspace{-0.2em}h\inv(p)\hspace{-0.2em}=\hspace{-0.2em}\{& w\in \powerset(H\times P) \ |\ \forall p\in P (f(p)=q\Rightarrow \exists!\ x\in H ((x,p)\in w)),\\
&\hspace{2.9em} \forall\ p\in P\ \forall\ x\in H ((x,p)\in w\Rightarrow h(x)=p \AND f(p)=q ) \}.
\end{align*}
We can now glue all these fibers together along the indexing given by $Q$ and obtain the following expression for the dependent product:
\begin{prop}\label{prop.dprodint}
	Given $f:P\rightarrow Q$ and $h:H\rightarrow P$ in an arbitary topos $\Etopos$, the dependent product $\prod_f[h]$ is defined on objects by the internal language formula
	\begin{flalign*}
	\textstyle\prod_{f} [h]&=\coprod_{q\in Q}\prod_{p\in f\inv(q)} h\inv(p)\\
	&=\{ (q,w)\hspace{-0.1em}\in\hspace{-0.1em} Q\hspace{-0.1em}\times\hspace{-0.1em} \powerset(H\hspace{-0.1em}\times\hspace{-0.1em} P) \ |\ \forall p\hspace{-0.1em}\in\hspace{-0.1em} P\hspace{-0.1em} \ (f(p)\hspace{-0.1em}=\hspace{-0.1em}q\Rightarrow \exists!x\hspace{-0.1em}\in\hspace{-0.1em} H ((x,p)\hspace{-0.1em}\in\hspace{-0.1em} w)),\\
	&\hspace{7.6em} \forall p\in P\ \forall x\in H ((x,p)\in w\Rightarrow h(x)=p \AND f(p)=q )\}
	\end{flalign*}
	with, as structural morphism, the projection onto the $Q$-component.
\end{prop}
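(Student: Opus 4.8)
The plan is to verify directly that the subobject of $Q\times\powerset(H\times P)$ cut out by the displayed formula, equipped with the projection $\pi_Q$ to $Q$, satisfies the universal property characterising $\prod_f$ as the right adjoint to $f^*$. Writing $D\rightarrowtail Q\times\powerset(H\times P)$ for this subobject and $[\pi_Q]$ for the resulting object of $\Etopos/Q$, I would exhibit, for an arbitrary object $[k]:K\to Q$ of $\Etopos/Q$, a bijection
\begin{equation*}
\Etopos/Q\big([k],[\pi_Q]\big)\;\cong\;\Etopos/P\big(f^*[k],[h]\big)
\end{equation*}
and then check that it is natural in $[k]$. That the formula legitimately defines a subobject needs no separate argument: the interpretation of higher-order intuitionistic logic in $\Etopos$ is sound, so a formula with the indicated free variables carves out a well-defined subobject, and the truth of the two conjoined clauses is stable under pullback.

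The heart of the matter is to unwind the left-hand side. A morphism $[k]\to[\pi_Q]$ over $Q$ is a map $g:K\to\powerset(H\times P)$ such that $\langle k,g\rangle:K\to Q\times\powerset(H\times P)$ factors through $D$. By the defining adjunction of the power object, such a $g$ corresponds bijectively to a subobject $R\rightarrowtail K\times H\times P$, the membership relation with $(y,x,p)\in R$ iff $(x,p)\in g(y)$. I would then translate, stage by stage via Kripke--Joyal semantics (equivalently, by testing on generalised elements), the requirement that $\langle k,g\rangle$ factor through $D$ into conditions on $R$: the second clause becomes the containment of $R$ inside the subobject of $K\times H\times P$ defined by $h(x)=p \AND f(p)=k(y)$, while the first clause becomes the assertion that, over the subobject $f(p)=k(y)$, the relation $R$ is total and single-valued in the variable $x$.

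Reading these conditions geometrically identifies $R$ with the graph of a morphism in $\Etopos/P$. The clause $f(p)=k(y)$ confines $R$ to the pullback $K\times_Q P$, whose projection to $P$ is exactly the underlying object of $f^*[k]$; the totality-and-uniqueness clause says that $R$ is a functional relation on $K\times_Q P$, hence — by the standard correspondence between functional relations and morphisms in a topos — the graph of a unique map $\sigma:f^*[k]\to H$; and the surviving condition $h(x)=p$ says precisely that $h\circ\sigma$ equals the structural map of $f^*[k]$, \ie that $\sigma$ is a morphism $f^*[k]\to[h]$ in $\Etopos/P$. Running this dictionary backwards recovers $g$ from $\sigma$, so the two assignments are mutually inverse. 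Naturality in $[k]$ is then inherited, since both the power-object adjunction and the functional-relation/morphism correspondence are natural and every clause is stable under base change along maps $K'\to K$ over $Q$.

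The step I expect to be the main obstacle is the careful translation in the second paragraph, and in particular the extraction of a genuine morphism from the $\exists!$ clause. One must check that $\forall p\,(f(p)=k(y)\Rightarrow\exists! x\,((x,p)\in w))$, interpreted in $\Etopos$, delivers exactly totality and single-valuedness of $R$ relative to the domain of $f^*[k]$, rather than merely over all of $K\times P$. This is where the reformulation of the fibres recorded just before the statement — absorbing the constraint $f(p)=q$ into the formula so that $w$ ranges over the fixed object $\powerset(H\times P)$ — pays off, since it lets the same power object serve at every stage and keeps $R$ defined over the correct base. Once this is pinned down, the remaining verifications (that $h\circ\sigma$ is the structural map, and the naturality square) are routine.
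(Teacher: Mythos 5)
Your proposal is correct and takes essentially the same route as the paper: the paper's proof also establishes the bijection $\Etopos/Q\bigl([k],\textstyle\prod_f[h]\bigr)\cong\Etopos/P\bigl(f^*[k],[h]\bigr)$ by passing, through the power object $\powerset(H\times P)$, between arrows $\langle k,\beta\rangle$ and the relations they classify, and by invoking the correspondence between provably functional relations and morphisms --- it merely compresses this into ``the usual set-theoretic proof is constructive and therefore still works,'' defining the two directions element-wise in the internal language. Your explicit Kripke--Joyal unwinding of the factorization through the defining subobject is precisely the content that the paper later externalizes categorically in Lemma \ref{lemmatecnico} and the theorem following it, so your extra detail anticipates, rather than diverges from, the text.
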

\begin{proof}
	The usual set-theoretic proof is constructive and therefore still works. Consider an arrow $k:K\rightarrow Q$: to an arrow $[k]\rightarrow \prod_f[h]$ given by $u\mapsto (k(u),\beta(u))\in \prod_f[h]\subseteq Q\times \powerset(H\times P)$ we associate the arrow $\alpha:f^*[k]\rightarrow [h]$ mapping $(p,u)$ to the unique $x$ such that $(p,x)\in\beta(u)$; vice versa, to $\alpha:f^*[k]\rightarrow [h]$ we associate the arrow $[k]\rightarrow\prod_f[h]$ defined by $u\mapsto (k(u),\{(p,x)\ |\ x=\beta(p,u) \})$. An easy computation shows that this is a natural equivalence. 
\end{proof}
This proof exploits the functional completeness of the internal language of a topos (\ie the possibility to define arrows element-wise) and the existence of power objects in a topos. Our aim now is to find a categorical description of the dependent product, inspired by the logical one just provided. 

Recall that, given a finitely complete category $\Etopos$ and an object $X\in \Etopos$, `the' \emph{power object} of $X$ is an object $\powerset(X)$ together with a monomorphism $\in_X\rightarrowtail X\times\powerset(X)$ such that for any subobject $n:N\rightarrowtail X\times Y$ there is a unique $n':Y\rightarrow \powerset(X)$ for which there is a pullback square 
	\begin{equation*}
		\begin{tikzcd}
		N \ar[r] \ar[d, tail, "n"'] & \in_X \ar[d, tail]\\
		X\times Y \ar[r, "1\times n'"] & X\times\powerset(X).
		\end{tikzcd}
	\end{equation*}
Notice that if $\Etopos$ is well-powered (that is, the collection of subobjects of any given object is a set) this means that $\powerset(X)$ is the representing object for the functor $\textup{Sub}(X\times -):\Etopos\op\rightarrow \Set$. In the sequel we will call $n'$ the \emph{classifying arrow} for $n$, and $n$ the \textit{classified subobject}. The subobject $\in_X$ is to be thought as the collection of pairs $(x,S)$ where $x\in X$, $S\subseteq X$ and $x\in S$: then $n'$ sends $y$ to $\{ x'\ |\ (x',y)\in N \}$, so that $N$ is indeed the pullback of $\in_X$ along $1\times n'$. 

The power-object construction can be made into a contravariant functor $\powerset:\Etopos\op\rightarrow\Etopos$: in the internal language, for any $\omega:X\rightarrow Y$ the arrow $\powerset(\omega):\powerset(Y)\to \powerset(X)$ operates as the inverse image. It is well known that a finitely complete category with all power objects is an elementary topos \cite[Sections A2.1-A2.3]{elephant}; in particular, the subobject classifier is $\Omega=\powerset(1_\Etopos)$. 

\begin{rmk}\label{rmk.powerinslice}
In $\Etopos/Q$, the power object of $[k:K\to Q]$ is the equalizer $$\Eq\left(\pi_{\powerset(K)}, \AND_K \circ ((\powerset(k)\circ \{\cdot\}_Q)\times 1_{\powerset(K)}): Q\times \powerset(K)\rightrightarrows \powerset(K)\right),$$
where $\AND_K:\powerset(K) \times \powerset(K) \to \powerset(K)$ is the intersection arrow, with the canonical projection onto $Q$ \cite[Theorem IV.7.1]{maclanemoerdijk}. Its domain is denoted by $\powerset_Q(k)$ and in the internal language is described as
$$\powerset_Q(k)=\{(q,S)\in Q\times \powerset(K)\ |\ S\subseteq k\inv(q) \}.$$	
\end{rmk}

Now, in order to obtain a categorical description of the object $\prod_f[h]$, we shall dissect its logical description given by Proposition \ref{prop.dprodint} into smaller pieces which will be more manageable. We can start from
$$S:=\{(p,w)\in P\times \powerset(H\times P)\ |\ \exists ! x\in H ( (x,p)\in w) \},$$
a subobject of $P\times\powerset(H\times P)$ which expresses functionality of $w$ in the variable $p$. To build it we consider the classifying arrow $\phi:P\times\powerset(H\times P)\rightarrow\powerset(H)$ for the subobject $e^{H}_{P}:=\in_{H\times P}\,\rightarrowtail H\times P\times\powerset(H\times P)$; this acts as $(p,w)\mapsto\{x\in H\ |\ (x,p)\in w\}$.
In the internal language, $S$ is the collection of pairs $(p,w)$ such that $\phi(p,w)$ is a singleton: in other words, $S$ is the following pullback: 
\begin{equation}\label{eq.defS}
\begin{tikzcd}
S\ar[d, tail] \ar[r] & H\ar[d, "{\{\cdot\}_H}", tail]\\
P\times\powerset(H\times P) \ar[r, "\phi"] & \powerset(H) \ar[ul, "\lrcorner"{near end, xshift=-2ex}, phantom]
\end{tikzcd}
\end{equation}

Now, in the above description of $\prod_f[h]$ this functionality of $w$ in $p$ is required for all the $p$'s in the fibers of $f$. To address this, we can exploit the (external) $\forall_g$ functor: in general, for a subobject $A\rightarrowtail X$ and an arrow $g:X\rightarrow Y$,
$$\forall_g(A)=\{y\in Y\ |\ g\inv(y)\subseteq A \}.$$
In particular, for $f\times 1_{\powerset(H\times P)}:P\times\powerset(H\times P)\rightarrow Q\times\powerset(H\times P)$, we obtain the object
$$\forall_{f\times 1}(S)\hspace{-0.1em}=\hspace{-0.1em}\{(q,w)\hspace{-0.1em}\in\hspace{-0.1em} Q\times \powerset(H\times P)\ |\ \forall p\hspace{-0.1em} \in\hspace{-0.1em} P\hspace{-0.1em} \ (f(p)=q \Rightarrow \exists ! x\hspace{-0.1em}\in\hspace{-0.1em} H ((x,p)\hspace{-0.1em}\in\hspace{-0.1em} w)) \}$$

Next, we should intersect $\forall_{f\times 1}(S)$ with
$$T^f_1=\{(q,w)\in Q\times \powerset(H\times P)\ |\ \forall\ p\in P\ \forall\ x\in H\ ((x,p)\in w\Rightarrow f(p)=q) \},$$
which expresses a ``fiber condition'' relating elements of $w$ and $q$: in fact, recall that $w$ should be thought as a tuple in $\prod_{p\in f\inv(q)} h\inv(p)$. The subobject $T^f_1$ can be described by the equivalent condition $w\subseteq \pi_P\inv f\inv(q)$ and thus coincides with $\powerset_Q(f\circ \pi_P:H\times P\rightarrow Q)$. But it is also equal to $\forall_\tau(W_1)$, where $\tau$ is the composite arrow $$\tau:Q\times\in_{H\times P}\,\rightarrowtail Q\times H\times P\times\powerset(H\times P) \rightarrow Q\times\powerset(H\times P)$$
and
\begin{equation*}
	\begin{tikzcd}
	W_1 \arrow[d, tail] \arrow[r, tail]               & H\times P\times\powerset(H\times P) \arrow[d, tail, "{\langle f\pi_P,1_{H\times P\times\powerset(H\times P)}\rangle}"] \\
	Q\times\in_{H\times P} \arrow[r, tail, "1_{Q}\times e^{H}_{P}"] & Q\times H\times P\times\powerset( H\times P), \arrow[lu, "\lrcorner"{near end, xshift=-3ex}, phantom]                
	\end{tikzcd}
\end{equation*}
where $\pi_{P}$ is the canonical projection $H\times P\times\powerset( H\times P) \to P$, is the subobject of quadruples $(q,x,p,w)$ such that $(x,p)\in w$ and $q=f(p)$.

Finally, for the last condition we intersect once more with
$$T^h_2=\{(q,w)\in Q\times \powerset(H\times P)\ |\ \forall\ p\in P\ \forall\ x\in H\ ((x,p)\in w\Rightarrow h(x)=p) \},$$
stating that whenever $(x,p)\in w$, $(x,p)$ also belongs to the graph of $h$. Similarly to $T^f_1$, we obtain that $T^h_2=\forall_\tau(W_2)$, where
\begin{equation*}
	\begin{tikzcd}
	W_2 \arrow[d, tail] \arrow[r, tail]               & Q\times H\times\powerset(H\times P) \arrow[d, tail, "{1_Q\times\langle 1_{H},h\rangle\times 1_{\powerset(H\times P)}}"] \\
	Q\times\in_{H\times P} \arrow[r, tail, "1_{Q}\times e^{H}_{P}"] & Q\times H\times P\times\powerset( H\times P) \arrow[lu, "\lrcorner"{near end, xshift=-3ex}, phantom]                
	\end{tikzcd}
\end{equation*}
is the subobject of quadruples $(q,x,p,w)$ such that $(x,p)\in w$ and $ p=h(x)$. 

Summarizing, we have shown that $\prod_f[h]\simeq \forall_{f\times1}(S)\cap T^f_1\cap T^h_2$.

We shall now give a purely categorical proof of the fact that the object $\forall_{f\times1}(S)\cap T^f_1\cap T^h_2$, with its canonical projection to $Q$, satisfies the universal property of the object $\prod_f[h]$. Everything reduces to the study of certain subobjects of $H\times P\times K$, which will serve as a connection between arrows $[f^*(k):f^{\ast}(K)\to P]\rightarrow[h]$ and $[k:K\to Q]\rightarrow\prod_f[h]$. Here is how they come into play: an arrow $\alpha: [f^*(k)]\rightarrow [h]$ is precisely an arrow $\alpha:f^{\ast}(K)\to H$ in $\Etopos$ satisfying the slice condition $h\circ \alpha=f^*(k)$; identifying this arrow with its graph $\langle \alpha,1\rangle:f^*(K)\rightarrowtail H\times f^*(K)$ and regarding $f^*(K)$ as a subobject of $P\times K$ via the monomorphism $\langle f^*(k),k^*(f)\rangle:f^*(K)\rightarrowtail P\times K$, we obtain a subobject $\langle m_H,m_P,m_K\rangle:M\rightarrowtail H\times P\times K$ satisfying the following properties:
	\begin{enumerate}[(i)]
		\item $h\circ m_H=m_P$, \ie it is a morphism in the slice topos $\Etopos\slash P$;
		\item there is an isomorphism $\bar{m}:M\xrightarrow{\sim} f^*(K)$ such that $f^*(k)\circ \bar{m}=m_P$ and $k^*(f)\circ \bar{m}=m_K$, \ie $M$ represents the graph of an arrow $f^*(K)\rightarrow H$.
	\end{enumerate}

In other words, the following diagram is commutative:
\begin{equation}\label{arrowfrompb}
\begin{tikzcd}[column sep=large]
 M\ar[d, "{\langle m_H,m_K\rangle}"'] \ar[r, "\sim", no head] \ar[dr, "{\langle m_H, m_P, m_K\rangle}"{description}, tail] & f^*(K) \ar[d,tail, "{\langle \alpha, f^*(k), k^*(f)\rangle}"]\\
 H\times K\ar[r, "{\langle 1_{H},h\rangle\times 1_{K}}"{below, yshift=-0.2ex}]  & H\times P\times K
\end{tikzcd}
\end{equation}

On the other hand, \textit{any} arrow $\beta: K\rightarrow\powerset(H\times P)$ corresponds to a subobject $M$ of $H\times P\times K$, which can be described, using the internal language, as the collection of triples $(x,p,u)$ such that $(x,p)\in \beta(u)$ and, categorically, as the following pullback:
\begin{equation}\label{Mfrombeta}
	\begin{tikzcd}[column sep=large]
	M \ar[d, "{\langle m_H,m_P,m_K\rangle}"', tail] \ar[r] & \in_{H\times P}  \ar[d, tail]\\
	H\times P\times K \ar[r, "1_H\times 1_P\times \beta"{below, yshift=-0.5ex}] & H\times P\times \powerset(H \times P)  \ar[ul, "\lrcorner"{near end, xshift=-3ex}, phantom]
	\end{tikzcd}
\end{equation}

\begin{lemma}
	Consider $\langle k,\beta\rangle: K\rightarrow Q\times\powerset(H\times P)$ and $M$ classified by $\beta$ as in square (\ref{Mfrombeta}); then
	\begin{enumerate}[(i)]\label{lemmatecnico}
		\item \label{lemmaT1} $\langle k,\beta\rangle$ factors through $T^f_1$ if and only if $\langle m_H,m_P,m_K\rangle:M\rightarrowtail H\times P\times K$ factors through $1_H\times \langle f^*(k),k^*(f)\rangle: H\times f^*(K)\rightarrowtail H\times P\times K$;
		\item \label{lemmaT2} $\langle k,\beta\rangle$ factors through $T^h_2$ if and only if $h\circ m_H=m_P$;
		\item \label{lemmaS} 	$\langle k, \beta\rangle$ factors through $\forall_{f\times1}(S)$ if and only if there is a morphism $\alpha: f^*(K)\rightarrow H$ such that $\langle \alpha,1\rangle:f^*(K)\rightarrowtail H\times f^*(K)$ is the pullback of $M$ along $1_H\times\langle f^*(k),k^*(f)\rangle$.
	\end{enumerate}
	
\end{lemma}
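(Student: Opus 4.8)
The plan is to prove all three equivalences by the same mechanism. The arrow $\beta$ classifies $M$ as a subobject of $(H\times P)\times K$ via square (\ref{Mfrombeta}), so I would transport each factorization condition on $\langle k,\beta\rangle$ into a condition on $M$ by pulling the relevant structural map back along $\langle k,\beta\rangle$ and invoking Beck--Chevalley for the $\forall$ functors. Throughout, the two facts I would lean on are: that an arrow $t$ factors through a subobject $U$ iff $t^*(U)$ is the top subobject of the domain, and that $\forall_p(C)$ is top iff $C$ is top (from the adjunction $p^*\dashv\forall_p$, since $p^*$ preserves the top subobject).

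For (i) and (ii) I would use the descriptions $T^f_1=\forall_\tau(W_1)$ and $T^h_2=\forall_\tau(W_2)$ established above. Forming the pullback of $\tau$ along $\langle k,\beta\rangle$ and comparing with square (\ref{Mfrombeta}) identifies this pullback with $M$ itself, the left leg being $m_K$ and the top leg an arrow $j:M\to Q\times\in_{H\times P}$. Beck--Chevalley then gives $\langle k,\beta\rangle^*\forall_\tau(W_i)=\forall_{m_K}\big(j^*(W_i)\big)$, so that $\langle k,\beta\rangle$ factors through $T^f_1$ (resp.\ $T^h_2$) iff $j^*(W_i)$ is top, i.e.\ iff $j$ factors through $W_i$. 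Unwinding the pullback definitions of $W_1$ and $W_2$ turns these factorizations into the equations $f\circ m_P=k\circ m_K$ and $h\circ m_H=m_P$ respectively. The latter is already the assertion of (ii). For (i) it remains to observe that $1_H\times\langle f^*(k),k^*(f)\rangle$ exhibits $H\times f^*(K)$ as the equalizer of the projections $f\pi_P$ and $k\pi_K$ on $H\times P\times K$ (because $H\times-$ preserves the equalizer defining $f^*(K)$), so that $M$ factors through it exactly when $f\circ m_P=k\circ m_K$.

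For (iii) the same strategy applies, but with a singleton condition in place of an inclusion. Pulling $f\times1_{\powerset(H\times P)}$ back along $\langle k,\beta\rangle$ yields, again by inspection, the projection $k^*(f):f^*(K)\to K$ together with the top arrow $g=\langle f^*(k),\beta\circ k^*(f)\rangle:f^*(K)\to P\times\powerset(H\times P)$. Beck--Chevalley and the adjunction $k^*(f)^*\dashv\forall_{k^*(f)}$ reduce ``$\langle k,\beta\rangle$ factors through $\forall_{f\times1}(S)$'' to ``$g$ factors through $S$''. Since $S$ is by definition (\ref{eq.defS}) the pullback of $\{\cdot\}_H$ along $\phi$, the latter holds iff $\phi\circ g$ factors through the monomorphism $\{\cdot\}_H:H\rightarrowtail\powerset(H)$.

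The key point to finish is to identify $\phi\circ g$ as the classifying arrow of the subobject $M''\rightarrowtail H\times f^*(K)$ obtained by pulling $M$ back along $1_H\times\langle f^*(k),k^*(f)\rangle$: chasing (\ref{Mfrombeta}) and the definition of $\phi$ shows that $M''$ is classified by $(p,u)\mapsto\{x\mid(x,p)\in\beta(u)\}=\phi\circ g$. I would then invoke the standard fact that, because $\{\cdot\}_H$ classifies the diagonal $\Delta_H$, a subobject of $H\times f^*(K)$ is the graph $\langle\alpha,1\rangle$ of a necessarily unique morphism $\alpha:f^*(K)\to H$ iff its classifying arrow factors through $\{\cdot\}_H$; applying this to $M''$ yields the asserted equivalence, with $\langle\alpha,1\rangle$ being $M''$, i.e.\ the pullback of $M$. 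I expect (iii) to be the main obstacle: unlike (i)--(ii) the condition is existence \emph{and} uniqueness rather than a plain inclusion, so it cannot be reduced to an equation between arrows out of $M$; the work lies in correctly transporting $S$ through the Beck--Chevalley square and in recognizing ``classifying arrow factors through the singleton'' as ``the subobject is the graph of a morphism'', while keeping careful track of which product each subobject inhabits ($H\times f^*(K)$ versus $H\times P\times K$).
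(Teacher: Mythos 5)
Your proof is correct and follows essentially the same route as the paper's: the same preliminary identification of the pullback of $\langle k,\beta\rangle$ along $\tau$ (resp.\ along $f\times 1$) with $M$ (resp.\ with $f^*(K)$), the same reduction of factorization through $\forall_\tau(W_i)$ and $\forall_{f\times 1}(S)$ to factorization of the pulled-back arrows (the paper uses the adjunction $\tau^*\dashv\forall_\tau$ where you invoke Beck--Chevalley, which amounts to the same thing), and the same classifying-arrow/pullback-lemma argument in (iii) identifying the pullback of $M$ with a graph $\langle\alpha,1\rangle$. Your explicit equalizer justification in (i) and your phrasing of (iii) as ``a subobject is a graph iff its classifying arrow factors through $\{\cdot\}_H$'' are just slightly more detailed renderings of steps the paper asserts directly.
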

\begin{proof} 
	We preliminarily observe that we have the following rectangle, whose internal squares are both pullbacks and whose lower composite arrow is $\tau$: 
\begin{equation*}
	\begin{tikzcd}
	M \arrow[r, "{\langle m_H,m_P,m_K\rangle}", tail] \arrow[d] & H\times P\times K \arrow[r] \arrow[d, "{\langle k\circ \pi_K, 1_H\times 1_P\times \beta\rangle}"'] & K \arrow[d, "{\langle k,\beta\rangle}"] \\
	Q\times\in_{H\times P} \arrow[r, "1_{Q}\times e^{H}_{P}"{below, yshift=-1ex}, tail]                      & Q\times H\times P\times\powerset(H\times P)  \arrow[lu, "\lrcorner"{near end, xshift=-2.8ex}, phantom] \ar[r]                      & Q\times\powerset(H\times P) \arrow[lu, "\lrcorner"{near end, yshift=0.6ex, xshift=-1.8ex}, phantom]           
	\end{tikzcd}
\end{equation*}	
\begin{enumerate}[(i)]
	\item $\langle k,\beta\rangle$ factors through $T^f_1=\forall_\tau(W_1)$ if and only if its pullback along $\tau$ factors through $W_{1}\rightarrowtail Q\times\in_{H\times P}$. By the universal property of the pullback square defining $W_{1}$, $\tau^{\ast}(\langle k,\beta\rangle)$ factors through $W_{1}\rightarrowtail Q\times\in_{H\times P}$ if and only if the composite arrow ${\langle k\circ \pi_K, 1_H\times 1_P\times \beta\rangle}\circ {\langle m_H,m_P,m_K\rangle}=\langle k\circ  m_K, m_H, m_P, \beta \circ m_K\rangle: M\rightarrow Q\times H\times P\times\powerset(H \times P)$ factors through $\langle f\circ \pi_P,1_{P\times \powerset(H\times P)}\rangle:H\times P\times\powerset(H\times P)\rightarrowtail Q\times H\times P\times \powerset(H\times P)$. Now, if such a factorization exists then it is necessarily equal to $\langle m_H, m_P, \beta \circ m_K\rangle:M\to H\times P\times\powerset( H\times P)$, and this arrow satisfies the required property if and only if, denoting by $\pi_{Q}$ the canonical projection $Q\times H\times P\times\powerset( H\times P) \to Q$, $\pi_{Q}\circ \langle f\circ \pi_P,1_{P\times \powerset(H\times P)}\rangle\circ \langle m_H, m_P, \beta \circ m_K\rangle=\pi_{Q}\circ \langle k\circ  m_K, m_H, m_P, \beta \circ m_K\rangle$. But this holds precisely when $f\circ m_P=k\circ m_K$, \ie $\langle m_H, m_P,m_K\rangle$ factors through $1_H\times \langle f^*(k),k^*(f)\rangle: H\times f^*(K)\rightarrowtail H\times P\times K$.

	\item Similarly to (i), $\langle k,\beta\rangle$ factors through $T^h_2=\forall_\tau(W_2)$ if and only if its pullback along $\tau$ factors through $W_{2}\rightarrowtail Q\times\in_{H\times P}$, equivalently if and only if the arrow  $\langle k\circ  m_K, m_H, m_P, \beta \circ m_K\rangle$ factors through $1_{Q}\times\langle 1_{H},h\rangle\times 1_{\powerset(H\times P)}:Q\times H\times\powerset(H\times P)\rightarrow Q\times H\times P\times \powerset(H\times P)$; now, if such a factorization exists it is necessarily equal to $\langle k\circ  m_K, m_H, \beta \circ m_K\rangle$, and this arrow satisfies the required property if and only if, denoting by $\pi'_{P}$ the canonical projection $Q\times H\times P\times\powerset( H\times P) \to P$, $\pi'^{P}\circ \langle k\circ  m_K, m_H, m_P, \beta \circ m_K\rangle=\pi'^{P}\circ (1_{Q}\times\langle 1_{H},h\rangle\times 1_{\powerset(H\times P)}) \circ \langle k\circ  m_K, m_H, \beta \circ m_K\rangle$, \ie $m_P=h\circ m_H$.
	
	\item $\langle k, \beta\rangle$ factors through $\forall_{f\times 1}(S)$ if and only if its pullback along $f\times 1$, \ie $\langle f^*(k), \beta \circ k^*(f)\rangle: f^*(K)\rightarrow P\times\powerset(H\times P)$, factors through $S\rightarrowtail P\times\powerset(H\times P)$; this happens if and only if there is some $\alpha: f^*(K)\rightarrow H$ such that $\{\cdot\}_H \circ \alpha=\phi \circ \langle f^*(k),\beta \circ k^*(f)\rangle$ ($\alpha$ is also unique, since $\{\cdot\}_H$ is monic: see \cite[Corollary A2.2.3]{elephant}). Now, the arrows $\{\cdot\}_H \circ \alpha$ and $\phi \circ \langle f^*(k),\beta \circ k^*(f)\rangle$ are equal if and only if they classify the same subobject of $H\times f^*(K)$. It is immediate to see that $\{\cdot\}_H\circ \alpha$ classifies $\langle \alpha,1_{f^{\ast}(K)}\rangle:f^*(K)\rightarrowtail H\times f^*(K)$, and that $\phi \circ \langle f^*(k),\beta \circ k^*(f)\rangle$ classifies the pullback of $\in_{H\times P}\,\rightarrowtail H\times P\times\powerset(H\times P)$ along $1_{H}\times \langle f^*(k),\beta \circ k^*(f)\rangle$, which coincides (by the pullback lemma) with the pullback of $\langle m_H,m_P,m_K\rangle$ along $1_{H}\times \langle f^*(k),k^*(f)\rangle$. So $\{\cdot\}_H \circ \alpha=\phi \circ \langle f^*(k),\beta \circ k^*(f)\rangle$ if and only if the left-hand square in the following diagram is a pullback:
	\begin{equation*}
		\begin{tikzcd}[column sep=large]
		f^*(K)\ar[r, tail]\ar[d, tail, "{\langle\alpha,1_{f^{\ast}(K)}\rangle}"']&M \ar[d, "{\langle m_H,m_P,m_K\rangle}"', tail] \ar[r] & \in_{H\times P} \ar[d, tail] \\
		H\times f^*(K)\ar[r, tail, "{1_H\times\langle f^*(k),k^*(f)\rangle}"{below, yshift=-1ex}] &H\times P\times K \ar[r, "1_H\times 1_P\times \beta"{below, yshift=-1ex}] & H\times P\times \powerset(H \times P) \ar[ul, "\lrcorner"{near end, xshift=-3ex}, phantom]
		\end{tikzcd}
	\end{equation*}
\end{enumerate}\vspace{-2ex}\end{proof}
Notice that the factorization of $\langle k,\beta\rangle$ through $\forall_{f\times 1}(S)$ alone grants the existence of $\alpha$. Using the internal language, we can express this condition as the requirement that for every $p\in P$, if $f(p)=k(u)$ then there exists a unique $x$ such that $(x,p)\in \beta(u)$; in fact, $\alpha:f^*(K)\rightarrow H$ assigns to each such pair $(p,u)$ that single $x\in H$. On the other hand, $T^f_1$ and $T^h_2$ provide the fiber-like conditions which $\alpha$ and $\beta$ must satisfy.

We are now ready to state the central result of this section:
\begin{thm} Let $h:H\to P$ be an object of $\Etopos \slash P$. Then, with the above notation, $\prod_f[h]\cong \forall_{f\times1}(S)\cap T^f_1\cap T^h_2$. More specifically, for any object $k:K\to Q$ of $\Etopos \slash Q$, there is a natural bijective correspondence between the arrows $[f^*(k):f^{\ast}(K)\to P]\rightarrow[h]$ in $\Etopos \slash P$ and $[k:K\to Q]\rightarrow\prod_f[h]$ in $\Etopos \slash Q$. This correspondence sends 
	\begin{itemize}
		\item an arrow $\alpha:[f^*(k)]\rightarrow[h]$ in $\Etopos \slash P$ to the arrow $\langle k,\beta\rangle:[k]\to \forall_{f\times1}(S)\cap T^f_1\cap T^h_2\rightarrowtail Q\times\powerset(H\times P)$ in $\Etopos\slash Q$, where $\beta:K\to \powerset(H\times P)$ is the classifying arrow of the graph of $\alpha$, regarded as a subobject of $H\times P\times K$;
		
		\item an arrow $\langle k,\beta\rangle:[k]\to \forall_{f\times1}(S)\cap T^f_1\cap T^h_2\rightarrowtail Q\times\powerset(H\times P)$ in $\Etopos\slash Q$ to the arrow $\alpha:[f^*(k)]\rightarrow[h]$ in $\Etopos \slash P$ whose graph in $\Etopos$ is the subobject of $H\times P\times K$ classified by $\beta$.
	\end{itemize}
\end{thm}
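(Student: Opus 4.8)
The plan is to verify the stated universal property directly; the isomorphism $\prod_f[h]\cong \forall_{f\times1}(S)\cap T^f_1\cap T^h_2$ then follows from the Yoneda lemma, since the value of a right adjoint is determined up to isomorphism by the natural bijection it induces. Write $X:=\forall_{f\times1}(S)\cap T^f_1\cap T^h_2$, viewed as an object of $\Etopos\slash Q$ via its canonical projection. The first observation is that, because the structural map of $X$ is a projection onto $Q$, a morphism $[k]\to [X]$ in $\Etopos\slash Q$ is exactly an arrow $\langle k,\beta\rangle\colon K\to Q\times\powerset(H\times P)$ (its first component forced to be $k$ by the slice condition) that factors through the subobject $X\rightarrowtail Q\times\powerset(H\times P)$. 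Since factoring through an intersection of subobjects is equivalent to factoring through each of them, this amounts to $\langle k,\beta\rangle$ factoring simultaneously through $\forall_{f\times1}(S)$, through $T^f_1$ and through $T^h_2$.

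The heart of the argument is Lemma \ref{lemmatecnico}, which translates each of these three factorizations into a geometric condition on the subobject $M\rightarrowtail H\times P\times K$ classified by $\beta$ as in square (\ref{Mfrombeta}); I would combine its three parts as follows. Part (i) says that $M$ factors through $1_H\times\langle f^*(k),k^*(f)\rangle\colon H\times f^*(K)\rightarrowtail H\times P\times K$, so that $M$ may be regarded as a subobject of $H\times f^*(K)$; as pulling a subobject back along a mono through which it already factors returns the subobject itself, part (iii) then forces this subobject of $H\times f^*(K)$ to be the graph $\langle\alpha,1_{f^*(K)}\rangle$ of a necessarily unique arrow $\alpha\colon f^*(K)\to H$. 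Finally, under the resulting identification $M\xrightarrow{\sim}f^*(K)$ of diagram (\ref{arrowfrompb}) one has $m_H=\alpha$ and $m_P=f^*(k)$ up to this isomorphism, so part (ii) reads precisely as the slice condition $h\circ\alpha=f^*(k)$. Hence $\langle k,\beta\rangle$ factors through $X$ if and only if the subobject classified by $\beta$ is the graph of a morphism $\alpha\colon[f^*(k)]\to[h]$ in $\Etopos\slash P$.

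It remains to assemble this into the claimed bijection and to check naturality. The two assignments in the statement are exactly the two directions of the power-object bijection between subobjects $M\rightarrowtail H\times P\times K$ and their classifying arrows $\beta\colon K\to\powerset(H\times P)$, restricted on one side to the graphs of slice morphisms $[f^*(k)]\to[h]$ and on the other to the arrows $\langle k,\beta\rangle$ factoring through $X$; by the previous paragraph these two classes correspond to one another, so the restricted maps are mutually inverse. For naturality I would take a morphism $\gamma\colon[k']\to[k]$ in $\Etopos\slash Q$ and chase it through both descriptions: precomposing $\langle k,\beta\rangle$ with $\gamma$ replaces $\beta$ by $\beta\circ\gamma$, whose classified subobject is the pullback of $M$ along $1_H\times 1_P\times\gamma$, whereas on the other side the corresponding operation is precomposition of $\alpha$ with $f^*(\gamma)$; one then checks that graphs are carried to graphs compatibly under these pullbacks.

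I expect the main obstacle to be this last naturality verification. The per-object bijection is delivered almost entirely by Lemma \ref{lemmatecnico} together with the power-object universal property, but confirming that the correspondence commutes with precomposition requires carefully tracking how the subobject $M$, the pullback square (\ref{Mfrombeta}) and the identification of diagram (\ref{arrowfrompb}) all transform under an arbitrary $\gamma$, and in particular that $f^*(\gamma)$ matches the base change $1_H\times 1_P\times\gamma$ on the nose. A secondary point to treat with care is the synthesis of parts (i) and (iii): one must ensure that (i) genuinely upgrades the ``pullback is a graph'' conclusion of (iii) to the statement that $M$ \emph{itself} is a graph, rather than merely restricting to one after pulling back.
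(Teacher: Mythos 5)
Your proposal is correct and takes essentially the same route as the paper's own proof: both reduce the universal property to the three factorization criteria of Lemma \ref{lemmatecnico}, combine them to show that $\langle k,\beta\rangle$ factors through $\forall_{f\times1}(S)\cap T^f_1\cap T^h_2$ exactly when the subobject $M$ classified by $\beta$ is the graph of a slice morphism $\alpha:[f^*(k)]\to[h]$, and then invoke the power-object correspondence to get the bijection. Your explicit synthesis of parts (i) and (iii) (pulling a subobject back along a monomorphism through which it already factors returns that subobject, so the ``pullback is a graph'' conclusion applies to $M$ itself) and your hands-on naturality check via $\gamma:[k']\to[k]$ merely spell out steps the paper leaves implicit, since it dismisses naturality as immediate from the universal properties involved.
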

\begin{proof}
	We will show that a subobject $\langle m_H,m_P,m_K\rangle:M\rightarrowtail H\times P\times K$ makes diagram (\ref{arrowfrompb}) commutative, \ie corresponds to an arrow $\alpha:[f^*(k)]\rightarrow[h]$ in $\Etopos\slash P$, if and only if its classifying arrow $\beta:K\rightarrow\powerset(H\times P)$ is such that $\langle k,\beta\rangle $ factors through $\forall_{f\times 1}(S)\cap T^f_1\cap T^h_2$:
	\begin{equation*}
		\begin{tikzcd}
		f^*(K) \arrow[rdd, "f^*(k)"'] \arrow[rrd, "\alpha", dashed] \arrow[rrr, "k^*(f)"] &                     &                   & K \arrow[rdd, "k"'] \arrow[rrd, "{\langle k,\beta\rangle}", dashed] &   &                                                       \\
		&                     & H \arrow[ld, "h"] &                                                                     &   & \forall_{f\times1}(S)\cap T^f_1\cap T^h_2 \arrow[d, tail]\ar[dl] \\
		& P \arrow[rrr, "f"{yshift=-0.2ex, below}] &                   &                                                                     & Q & Q\times\powerset(H\times P) \arrow[l, "\pi_Q"{yshift=-0.5ex}]       
		\end{tikzcd}
	\end{equation*}
	 Lemma \ref{lemmatecnico}(\ref{lemmaT2}) says that $\langle k,\beta\rangle$ factors through $T^h_2$ if and only if $M$ satisfies condition (i) for diagram (\ref{arrowfrompb}). Lemma \ref{lemmatecnico}(\ref{lemmaT1}) tells us that $\langle k,\beta\rangle$ factors through $T^f_1$ if and only if $\langle m_H,m_P,m_K\rangle:M\rightarrowtail H\times P\times K$ factors through $1_H\times \langle f^*(k),k^*(f)\rangle: H\times f^*(K)\rightarrowtail H\times P\times K$, while by Lemma \ref{lemmatecnico}(\ref{lemmaS}) $\langle k,\beta\rangle$ factors through $\forall_{f\times 1}(S)$ if and only if its pullback along $1_H\times \langle f^*(k),k^*(f)\rangle: H\times f^*(K)\rightarrowtail H\times P\times K$ is isomorphic to $\langle \alpha,1\rangle:f^*(K)\rightarrowtail H\times f^*(K)$. Therefore, $\beta$ factors through $T^f_1$ and $\forall_{f\times 1}(S)$ if and only if condition (ii) for diagram (\ref{arrowfrompb}) is satisfied.
	 So we can conclude that  $\beta:K\rightarrow \forall_{f\times 1}(S)\cap T^f_1\cap T^h_2$ as a morphism of $\Etopos/Q$ corresponds to a unique morphism $\alpha:[f^*(k)]\rightarrow[h]$ in $\Etopos\slash P$ and viceversa. The naturality of this correspondence is immediate, as all the arrows involved in it are defined by universal properties.
\end{proof}	

One might complain that the pervasive appearance of $\forall$ is not a big step towards an elementary treatment of the dependent product: yet it can be reduced to more basic structures, as we will show in a moment. To do so we must recall the definition of the \emph{covariant power-object functor} (see \cite[pag. 92]{elephant}): it is defined as $\powerset$ on objects, but it sends an arrow $f: Y\rightarrow X$ to the arrow $\exists f: \powerset(Y)\rightarrow\powerset(X)$ classifying the image of $\in_Y \, \rightarrowtail Y\times\powerset(Y)\xrightarrow{f\times 1} X\times \powerset(Y)$. The notation $\exists f$ is justified by the fact that $S\subseteq Y$ is sent to $\{f(y) \, |\ y\in S \}=\{ x\in X\ |\ \exists y\in S \ ( f(y)=x) \}$. Using the covariant power object functor, $\forall$ can be described in an elementary way. Indeed, the following result provides a description of the action of $\forall$ on subobjects entirely in terms of finite limits and power objects: 
\begin{prop}
	Let $f:Y\rightarrow X$ be an arrow and $i:A\rightarrowtail Y$ a subobject in an elementary topos $\Etopos$. Then $\forall_f(A)\simeq A'$ (as subobjects of $X$), where $A'$ is defined by the following pullback square:
	\begin{center}
		\begin{tikzcd}
		A' \ar[d, tail] \ar[rr]& & \powerset(A)\ar[d, rightarrowtail,"\exists i"]\\
		X\ar[r, "\{\cdot\}_X"'] & \powerset(X)\ar[r, "\powerset(f)"'] &\powerset(Y)\ar[ull, "\lrcorner"{xshift=-3ex, near end}, phantom]
		\end{tikzcd}
	\end{center}
More explicitly, the vertical arrow $\exists i:\powerset(A) \to \powerset(Y)$ is the classifying arrow of the composite subobject 
\begin{equation*}
\begin{tikzcd}
\in_{A} \ar[r, tail] \ar[r] & A\times \powerset(A)  \ar[r, tail, "i \times 1_{\powerset(A)}"] & Y\times \powerset(A) 
\end{tikzcd}
\end{equation*}
and the horizontal arrow $\powerset(f)\circ \{\cdot\}_X:X\to \powerset(Y)$ is the classifying arrow of the graph of $f$.
\end{prop}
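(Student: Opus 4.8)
The plan is to prove the claimed isomorphism of subobjects by testing against an arbitrary subobject and invoking the defining universal property of $\forall_f$. Recall that $\forall_f$ is the right adjoint to the pullback functor $f^*$ between subobject posets, so $\forall_f(A)$ is characterized by the property that for every subobject $m:B\rightarrowtail X$ one has $B\leq \forall_f(A)$ in $\mathrm{Sub}(X)$ if and only if $f^*(B)\leq A$ in $\mathrm{Sub}(Y)$. Since $\mathrm{Sub}(X)$ is a poset, it therefore suffices to show that $A'$ enjoys exactly the same characterization, namely $B\leq A'$ if and only if $f^*(B)\leq A$; the two subobjects must then coincide. Before doing so I would record that $A'$ genuinely is a subobject of $X$: the arrow $\exists i$ is a split monomorphism with retraction $\powerset(i)$ (the composite $\powerset(i)\circ\exists i$ is the identity, this being the internal rendering of $i\inv(i(S))=S$ for $i$ monic), so the pullback leg $A'\rightarrowtail X$ is monic as well.

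Next I would unwind the left-hand condition. By the universal property of the pullback defining $A'$, and because $\exists i$ is monic, $m:B\rightarrowtail X$ factors through $A'$ if and only if the composite $\powerset(f)\circ\{\cdot\}_X\circ m:B\to\powerset(Y)$ factors through $\exists i$. To make this usable I would pin down the two classifying arrows named in the statement. A direct computation (from the definition of the contravariant functor $\powerset$ as inverse image and of $\{\cdot\}_X$ via the diagonal) shows that $\powerset(f)\circ\{\cdot\}_X$ classifies the graph $\langle 1_Y,f\rangle:Y\rightarrowtail Y\times X$ of $f$; consequently $\powerset(f)\circ\{\cdot\}_X\circ m$ classifies the pullback of this graph along $1_Y\times m$, which is the subobject $G_B=\{(y,b)\mid f(y)=m(b)\}\rightarrowtail Y\times B$. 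The first projection identifies $G_B$ with $f^*(B)\rightarrowtail Y$ (the second coordinate being forced by monicity of $m$), so that $G_B$ factors through $i\times 1_B:A\times B\rightarrowtail Y\times B$ precisely when $f^*(B)\leq A$ in $\mathrm{Sub}(Y)$.

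The crux, which I expect to be the main obstacle, is the following lemma: for any arrow $c:B\to\powerset(Y)$ classifying a subobject $C\rightarrowtail Y\times B$, the arrow $c$ factors through $\exists i$ if and only if $C$ factors through $i\times 1_B$. To prove it I would combine the description of $\exists i$ given in the statement, as classifying the image of $\in_A$ under $i\times 1_{\powerset(A)}$, with the Beck--Chevalley condition. Concretely, writing $\exists_{(-)}$ and $(-)^*$ for direct and inverse image of subobjects along an arrow, the square with top $i\times 1_B$, bottom $i\times 1_{\powerset(A)}$, left $1_A\times c'$ and right $1_Y\times c'$ (for any auxiliary $c':B\to\powerset(A)$) is a pullback, so Beck--Chevalley gives $(1_Y\times c')^{*}\,\exists_{i\times 1_{\powerset(A)}}=\exists_{i\times 1_B}\,(1_A\times c')^{*}$. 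Applied to $\in_A$, this says that $\exists i\circ c'$ classifies the image under $i\times 1_B$ of the subobject classified by $c'$. Reading this in both directions yields the lemma: if $C$ factors through $A\times B$ I take $c'$ to classify $C$ regarded inside $A\times B$ and obtain $\exists i\circ c'=c$ by uniqueness of classifying arrows; conversely a factorization $c=\exists i\circ c'$ forces $C$ to equal an image under the monomorphism $i\times 1_B$, hence to factor through $A\times B$.

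Assembling the three steps, $B\leq A'$ is equivalent to ``$\powerset(f)\circ\{\cdot\}_X\circ m$ factors through $\exists i$'', equivalent (by the lemma with $c=\powerset(f)\circ\{\cdot\}_X\circ m$ and $C=G_B$) to ``$G_B$ factors through $i\times 1_B$'', equivalent to $f^*(B)\leq A$. As this holds for every subobject $B\rightarrowtail X$, the subobjects $A'$ and $\forall_f(A)$ satisfy the same universal property and therefore coincide. The only genuinely delicate point is the Beck--Chevalley step underlying the lemma; the remaining verifications (what $\powerset(f)\circ\{\cdot\}_X$ and $\exists i$ classify, and the identification $G_B\cong f^*(B)$) are routine diagram chases.
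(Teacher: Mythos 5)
Your proof is correct, but it takes a genuinely different route from the paper's. The paper disposes of the proposition in one line using the internal language: reading the defining pullback internally, $A'=\{(x,N)\in X\times\powerset(A)\ |\ f\inv(x)=i(N)\}$, and since $i$ is monic the subset $N$ is uniquely determined by $x$ (namely $N=i\inv(f\inv(x))$) and exists precisely when $f\inv(x)\subseteq i(A)$, so the projection to $X$ identifies $A'$ with $\{x\in X\ |\ f\inv(x)\subseteq i(A)\}$, which is exactly the description of $\forall_f(A)$ that the paper takes as its working definition. You instead argue entirely externally: you take the adjunction $f^*\dashv\forall_f$ on subobject posets as the defining property and show that $A'$ represents the same condition, $B\leq A'\Leftrightarrow f^*(B)\leq A$, the crux being your lemma that an arrow $c:B\to\powerset(Y)$ classifying $C\rightarrowtail Y\times B$ factors through $\exists i$ if and only if $C$ factors through $i\times 1_B$, proved via the Beck--Chevalley condition for images along the pullback square built from $i$ and $c'$. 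All the steps check out: the identification of what $\powerset(f)\circ\{\cdot\}_X\circ m$ classifies, the identification of $G_B$ with $f^*(B)$, the split-mono observation ($\powerset(i)\circ\exists i=1$) making $A'$ a genuine subobject of $X$, and the Yoneda-style uniqueness argument in the poset $\mathrm{Sub}(X)$. The trade-off is this: your argument is longer but purely diagrammatic, so it does not presuppose the soundness of internal-language (Kripke--Joyal) reasoning --- arguably closer in spirit to the paper's stated aim of a treatment relying only on finite limits and power objects; the paper's argument is a one-line computation, but it leans on the internal-language machinery used systematically throughout the paper (indeed, since the paper defines $\forall_f$ by the very formula $\{y\ |\ f\inv(y)\subseteq A\}$, its proof is little more than an unfolding of definitions, whereas yours also implicitly re-proves the equivalence of that description with the adjoint characterization).
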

\begin{proof}
		Using the internal language, we have:
		$$A'\hspace{-0.2em}=\hspace{-0.2em}\{(x,N)\hspace{-0.1em}\in\hspace{-0.1em} X\times\powerset(A)\ |\ f^{-1}(x)\hspace{-0.1em}=\hspace{-0.1em}i(N)\}\hspace{-0.2em}\cong\hspace{-0.2em}\{x\hspace{-0.1em}\in\hspace{-0.1em} X\ |\ f\inv(x)\hspace{-0.1em}\subseteq\hspace{-0.1em} i(A) \}\hspace{-0.1em}=\hspace{-0.1em}\forall_f(A).$$
\end{proof}

Distinguishing $T^f_1$ and $T^h_2$ we were able to isolate what exactly causes the induced arrow $\alpha:f^*(K)\rightarrow H$ to yield a morphism in the slice topos $\Etopos\slash P$; yet, we could have treated the intersection of $T^f_1$ and $T^h_2$ as a whole from the very beginning: being both $T^f_1=\forall_\tau(W_1)$ and $T^h_2=\forall_\tau(W_2)$, their intersection is $\forall_\tau(W_1\cap W_2)$ ($\forall$ commutes with intersections as it is a right adjoint). But there is an alternative and very simple formulation for $T^f_1\cap T^h_2$ in terms of a power object in the topos $\Etopos \slash Q$:
\begin{prop}
	As a subobject of $Q\times\powerset(H\times P)$, $T^f_1\cap T^h_2$ is isomorphic to the composite monomorphism $$\powerset_Q(H\xrightarrow{f\circ h}Q)\rightarrowtail Q\times\powerset(H)\xrightarrowtail{1_{Q}\times\exists(\langle1,h\rangle)}Q\times\powerset(H\times P).$$
\end{prop}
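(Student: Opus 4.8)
The plan is to separate the statement into two independent tasks: first check that the displayed arrow really is a monomorphism, so that it names a subobject of $Q\times\powerset(H\times P)$; then show, via the internal language of $\Etopos$ (legitimate by functional completeness, just as in the proof of Proposition~\ref{prop.dprodint}), that this subobject has exactly the same generalized elements as $T^f_1\cap T^h_2$, whence the two coincide. For the monicity I would proceed componentwise. The arrow $\langle 1_H,h\rangle:H\rightarrowtail H\times P$ is split monic, being a graph (its retraction is $\pi_H$). The covariant power-object functor sends monomorphisms to monomorphisms: for any mono $m$ one has $\powerset(m)\circ\exists m=1$, since forming the image of a subobject along $m$ and then taking its inverse image recovers the subobject when $m$ is monic; hence $\exists\langle 1_H,h\rangle$ is split monic, with explicit retraction $\powerset(\langle 1_H,h\rangle)$. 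As $1_Q\times(-)$ preserves monos and $\powerset_Q(f\circ h)\rightarrowtail Q\times\powerset(H)$ is monic by Remark~\ref{rmk.powerinslice}, the composite of monomorphisms is itself monic.

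It then remains to match this subobject with $T^f_1\cap T^h_2$. In the internal language, $\powerset_Q(f\circ h)$ consists of the pairs $(q,S)\in Q\times\powerset(H)$ with $S\subseteq(f\circ h)\inv(q)$, and the composite sends such a pair to $(q,\{(x,h(x))\mid x\in S\})$. Any element $w$ of this form satisfies both defining conditions at once: every $(x,p)\in w$ has $p=h(x)$, so $(q,w)\in T^h_2$, and $f(p)=f(h(x))=q$, so $(q,w)\in T^f_1$; thus the composite factors through $T^f_1\cap T^h_2$. Conversely, if $(q,w)\in T^f_1\cap T^h_2$ then every element of $w$ is of the shape $(x,h(x))$ with $f(h(x))=q$, so $w$ is the image under $\langle 1_H,h\rangle$ of $S:=\powerset(\langle 1_H,h\rangle)(w)=\{x\in H\mid (x,h(x))\in w\}\subseteq(f\circ h)\inv(q)$, and $(q,S)\in\powerset_Q(f\circ h)$ maps back to $(q,w)$. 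Because $\langle 1_H,h\rangle$ is monic, $S$ is uniquely recovered from $w$, so $w\mapsto S$ is inverse to $S\mapsto\{(x,h(x))\mid x\in S\}$.

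Having inverse assignments at the level of elements, and both arrows being monic into the common object $Q\times\powerset(H\times P)$, I can conclude that the two monomorphisms have the same image and therefore define the same subobject. The one genuinely categorical point — everything else being a routine transcription through the internal language — is that $\exists\langle 1_H,h\rangle$ is a monomorphism; I expect this to be the only potential obstacle, and the retraction identity $\powerset(m)\circ\exists m=1$ for monic $m$ is precisely what dispatches it, simultaneously guaranteeing that the passage $w\mapsto S$ is well defined as a map of subobjects rather than merely of elements.
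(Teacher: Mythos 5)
Your proof is correct and follows essentially the same route as the paper's: an internal-language computation showing that the image of $\powerset_Q(H\xrightarrow{f\circ h}Q)$ under $1_Q\times\exists(\langle 1,h\rangle)$ consists exactly of the pairs $(q,w)$ such that every $(x,p)\in w$ satisfies $p=h(x)$ and $f(p)=q$, i.e.\ of $T^f_1\cap T^h_2$. Your explicit check that $\exists(\langle 1,h\rangle)$ is split monic, with retraction $\powerset(\langle 1,h\rangle)$, addresses a point the paper leaves implicit (it simply labels the composite a monomorphism), and is a sound and welcome supplement rather than a departure from the paper's argument.
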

\begin{proof}
	Let us argue by using the internal language. We have
	$\powerset_Q(H\xrightarrow{f\circ h}Q)=\{(q,S)\in Q\times \powerset(H)\ |\ S\subseteq (f\circ h)\inv(q) \}$ by Remark \ref{rmk.powerinslice}. So $(1_{Q}\times\exists(\langle1,h\rangle))\circ \powerset_Q(H\xrightarrow{f\circ h}Q)=\{(q, w) \in Q\times \powerset(H\times P) \mid (\exists S)((q, S)\in \powerset_Q(H\xrightarrow{f\circ h}Q) \wedge w= \langle1,h\rangle(S))\}=\{(q, w)\in Q\times \powerset(H\times P) \mid \forall (x, p)\in w \!\ (p=h(x) \wedge f(p)=q)\}=T^f_1\cap T^h_2$. 
\end{proof}

To conclude the section, we describe what happens to the dependent product functor when we restrict it to a subtopos $\Ftopos$ of $\Etopos$. In general, any geometric morphism $\psi:\Ftopos\rightarrow \Etopos$ induces, for each $E\in \Etopos$, a geometric morphism $\psi_E:\Ftopos/\psi^*(E) \to \Etopos/E$ making the following diagram commutative:
\begin{equation*}
	\begin{tikzcd}
	\Ftopos \ar[r, "\psi"] & \Etopos\\
	\Ftopos/\psi^*(E)\ar[u, "l^{\Ftopos}_{\psi^*(E)}"] \ar[r, "\psi_E"'] & \Etopos/E \ar[u, "l^{\Etopos}_{E}"'] 
	\end{tikzcd}
\end{equation*}
The inverse image $\psi^*_E$ is defined as $\psi_E^*([h]):=[\psi^*(h)]$, while the direct image $\psi_{E*}$ sends an object $k:K\rightarrow \psi^*(E)$ to the pullback of $\psi_*(k)$ along the unit $\eta_E:E\rightarrow \psi_*\psi^*(E)$ \cite[Example A4.1.3]{elephant}; in particular, when $\psi$ is an inclusion $\psi_E$ is also an inclusion \cite[Example 5.18]{caramello.denseness}. Let us now consider a subtopos $i:\Ftopos \hookrightarrow \Etopos$. For any arrow $f:P\to Q$, the square 
 \begin{equation*}
 	\begin{tikzcd}
 	\Etopos/P \ar[d, "(i_{P})^{\ast}"']  &\Etopos/Q \ar[l, "f^*"'] \ar[d, "(i_Q)^{\ast}"]\\
 	\Ftopos/i^{\ast}(P)   &\Ftopos/i^{\ast}(Q) \ar[l, "{(i^{\ast}(f))}^*"] 
 	\end{tikzcd}
\end{equation*} 
of inverse images is commutative, since $i^{\ast}$ preserves pullbacks. Therefore, the square of direct images if also commutative. Notice that if $P$ and $Q$ lie in $\Ftopos$, $i^{\ast}(P)\cong P$, $i^{\ast}(Q)\cong Q$ and $(i_P)_{\ast}$, $(i_Q)_{\ast}$ are the canonical inclusion functors induced by the embedding of $\Ftopos$ into $\Etopos$.

Summarizing, we have the following result:

\begin{prop}\label{prop.sottotopos}
	Let $i:\Ftopos\hookrightarrow \Etopos$ be a subtopos and $f:P\rightarrow Q$ an arrow in $\Etopos$. Then
	\begin{equation}
		\begin{tikzcd}
		\Etopos/P \ar[r, "\prod^{\Etopos}_f"] &\Etopos/Q\\
		\Ftopos/i^{\ast}(P) \ar[u, "(i_P)_{\ast}"] \ar[r, "\prod^{\Ftopos}_{i^{\ast}(f)}"'] &\Ftopos/i^{\ast}(Q) \ar[u, "{(i_Q)}_{\ast}"']
		\end{tikzcd}
	\end{equation} 
	is a commutative diagram of geometric morphisms. In particular, if $f:P\rightarrow Q$ is a morphism in $\Ftopos$, then $\prod_f^\Ftopos:\Ftopos/P\rightarrow\Ftopos/Q$ is the restriction of $\prod_f^\Etopos:\Etopos/P\rightarrow\Etopos/Q$ along the canonical inclusions $\Ftopos/P \hookrightarrow \Etopos/P$ and $\Ftopos/Q \hookrightarrow \Etopos/Q$.
\end{prop}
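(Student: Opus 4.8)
The plan is to deduce the commutativity of the square of direct images from that of the square of inverse images, via the uniqueness of adjoints. I would regard the four functors as inverse or direct images of a single square of geometric morphisms, namely the square with vertical edges $i_P:\Ftopos/i^{\ast}(P)\to\Etopos/P$ and $i_Q:\Ftopos/i^{\ast}(Q)\to\Etopos/Q$, top edge $l^{\Etopos}_f$ and bottom edge $l^{\Ftopos}_{i^{\ast}(f)}$. The goal becomes to show that this commutes as a square of geometric morphisms, \ie that $l^{\Etopos}_f\circ i_P\cong i_Q\circ l^{\Ftopos}_{i^{\ast}(f)}$; taking direct images then gives precisely $\prod^{\Etopos}_f\circ (i_P)_{\ast}\cong (i_Q)_{\ast}\circ\prod^{\Ftopos}_{i^{\ast}(f)}$, the assertion of the proposition.

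First I would check that the square of inverse images commutes, \ie that $(i_P)^{\ast}\circ f^{\ast}\cong (i^{\ast}(f))^{\ast}\circ (i_Q)^{\ast}$. Unwinding the definitions, an object $[g:G\to Q]$ is sent by both routes to the pullback of $i^{\ast}(g)$ along $i^{\ast}(f)$ in $\Ftopos$: one route pulls back $g$ along $f$ in $\Etopos$ and then applies $i^{\ast}$, the other applies $i^{\ast}$ first and then pulls back along $i^{\ast}(f)$. As $i^{\ast}$ is an inverse image functor it is left exact, hence preserves these pullbacks, so the two routes agree up to canonical isomorphism---this is the commutativity already recorded before the statement.

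Next I would recognize the two composites: since inverse images of a composite geometric morphism compose in the reverse order, $(i_P)^{\ast}\circ f^{\ast}$ is the inverse image of $l^{\Etopos}_f\circ i_P$, while $(i^{\ast}(f))^{\ast}\circ (i_Q)^{\ast}$ is the inverse image of $i_Q\circ l^{\Ftopos}_{i^{\ast}(f)}$. Because a geometric morphism is determined up to isomorphism by its inverse image, the previous step shows $l^{\Etopos}_f\circ i_P\cong i_Q\circ l^{\Ftopos}_{i^{\ast}(f)}$; passing to the (essentially unique) right adjoints yields the desired isomorphism of direct images $\prod^{\Etopos}_f\circ (i_P)_{\ast}\cong (i_Q)_{\ast}\circ\prod^{\Ftopos}_{i^{\ast}(f)}$.

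For the final clause I would specialize to $f$ a morphism of $\Ftopos$. Then $P\cong i_{\ast}i^{\ast}(P)$ and similarly for $Q$, so that $i^{\ast}(P)\cong P$, $i^{\ast}(Q)\cong Q$, $i^{\ast}(f)\cong f$, and the unit $\eta_P:P\to i_{\ast}i^{\ast}(P)$ is an isomorphism. Since $(i_P)_{\ast}$ sends $k:K\to i^{\ast}(P)$ to the pullback of $i_{\ast}(k)$ along $\eta_P$, and pulling back along an isomorphism is harmless, $(i_P)_{\ast}$ reduces to the canonical inclusion $\Ftopos/P\hookrightarrow\Etopos/P$, and similarly for $(i_Q)_{\ast}$; the commutative square then expresses exactly that $\prod^{\Ftopos}_f$ is the restriction of $\prod^{\Etopos}_f$ along these inclusions. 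The one point demanding care throughout is the bookkeeping of variance: one must keep straight that inverse images reverse the order of composition while direct images preserve it, so that the single inverse-image identity corresponds to the sought direct-image identity and not to its transpose---the rest is a formal consequence of the left exactness of $i^{\ast}$ and the uniqueness of adjoints.
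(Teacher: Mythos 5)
Your proposal is correct and follows essentially the same route as the paper: both establish commutativity of the square of inverse images from the fact that $i^{\ast}$ preserves pullbacks, pass to the direct images via (essential) uniqueness of right adjoints, and obtain the final clause by observing that when $P$ and $Q$ lie in $\Ftopos$ the functors $(i_P)_{\ast}$ and $(i_Q)_{\ast}$ reduce to the canonical inclusions (the paper cites the same description of $\psi_{E\ast}$ as a pullback along the unit that you invoke). Your write-up merely makes explicit the adjoint-uniqueness step and the variance bookkeeping that the paper leaves implicit.
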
 

\section{Dependent product for Grothendieck toposes}

In this section we shall provide an explicit site-level description of dependent products in a Grothendieck topos. This relies on the fact that the slices $\Etopos\slash P$ and $\Etopos\slash Q$ of our topos $\Etopos$ can be represented as toposes of sheaves on suitable sites in such a way that the geometric morphism
$$l^{\cal E}_{f}:\Etopos\slash P \to \Etopos \slash Q$$
is induced by a comorphism between them.

Given a subtopos $\sheafify\dashv i:\Ftopos \hookrightarrow \Etopos$ and an object $E$ of $\Etopos$, we shall denote the direct image of the canonical geometric morphism $\Ftopos \slash \sheafify(E) \to \Etopos \slash E$ by $i_{E}$ and its inverse image by $\sheafify_{E}$.

Recall that the category of elements $\int P$ of a presheaf $P:\cbicat\op\rightarrow\Set$ has as objects the pairs $(X,x)$ with $X\in \cbicat$ and $x\in P(X)$ and as morphisms $(Y,y)\rightarrow (X,x)$ the arrows $g:Y\to X$ such that $P(g)(x)=y$. Categories of elements are in fact presentation sites for slice toposes: 
\begin{prop}\label{prop.sitoperslice}
Consider a sheaf topos $\sheafify\dashv i:\textup{Sh}(\cbicat,J)\hookrightarrow [\cbicat \op, \Set]$ and a presheaf $P:\cbicat\op\rightarrow\Set$. There is an equivalence of toposes 
$$R^J_P:\textup{Sh}(\cbicat, J)/\sheafify(P)\stackrel{\simeq}{\to} \textup{Sh}(\textstyle \int P, J_P),$$ where $J_P$ is the Grothendieck topology whose sieves are precisely those sent to $J$-covering sieves by the canonical functor $\pi_P:\int P\rightarrow\cbicat$. 
\begin{itemize}
	\item In the context of presheaves, $R_P:[\cbicat\op,\Set]/P\rightarrow [\left(\int P\right)\op,\Set]$ is defined by $$R_P[h](X,p)=h_X\inv(p).$$ Its pseudoinverse $L_P:[\left(\int P\right)\op,\Set]\rightarrow [\cbicat\op,\Set]/P$ is defined for a presheaf $W:(\int P)\op\rightarrow\Set$ as $$L_P({W})(X):=\bigsqcup_{p\in P(X)} W(X,p),$$ with structural morphism to $P$ the projection of each component $W(X,p)$ to the element $p$ indexing it. The definition on $R_P$ and $L_P$ on arrows is straightforward.
	
	\item The functor $R_P^J:\textup{Sh}(\cbicat,J)/\sheafify(P)\rightarrow \textup{Sh}(\int P, J_P)$ acts as $R_P\circ i_P$, that is $$R_P^J[w](X,p)\hspace{-0.2ex}=R\hspace{-0.2ex}_P[\eta_P^*w](X,p)\hspace{-0.2ex}\cong\hspace{-0.2ex}\{ x\hspace{-0.2ex}\in\hspace{-0.2ex} W(X)\ |\ w_X(x)\hspace{-0.2ex}=\hspace{-0.2ex}(\eta_P)_{X}(p) \},$$ 
	where $\eta_P:P\to a(P)$ is the unit of the adjunction $\sheafify \dashv i$. 
	
	Its pseudoinverse $L_P^J=\textup{Sh}(\int P, J_P) \to \textup{Sh}(\cbicat,J)/\sheafify(P)$ can be expressed as $L_P^J= \sheafify_P \circ L_P \circ (i_{J_P})_{\ast}$ (where $i_{J_P}$ is the canonical geometric inclusion $\textup{Sh}(\int P, J_P)\hookrightarrow [\left(\int P\right)\op,\Set]$), \ie
	$$L_P^J(W)(X)\hspace{-0.1em}:=\hspace{-0.1em}\colim(\textstyle\int W\rightarrow \textstyle\int P\rightarrow \textup{Sh}(\cbicat,J)/\sheafify(P))=\hspace{-0.3em}\underset{x\in W(X,p)}{\colim}\sheafify(\yo(X))$$
	and the structural morphism is the one induced by the $\sheafify(\tilde{p})$'s, where $\tilde{p}:\yo(X)\rightarrow P$ corresponds to $p\in P(X)$ via the Yoneda lemma:
	\begin{equation}\label{isoslice}
	\begin{tikzcd}
	{[\cbicat\op,\Set]/P} \ar[d, "\sheafify_P"', "\dashv"{}, xshift=-1ex]\arrow[r, "R_P", yshift=1ex]         \ar[r, phantom, "\sim"{yshift=-0.25ex}]                          & {[\left(\int P\right)\op,\Set]}      \ar[l, "L_P", yshift=-1ex]      \ar[d, "(i_{J_P})^{\ast}"', "\dashv"{}, xshift=-1ex]  \\
	{\textup{Sh}(\cbicat, J)/\sheafify(P)} \arrow[r, "R_P^J", yshift=1ex] \arrow[u, "i_P"', hook, xshift=1ex]\ar[r, phantom, "\sim"{yshift=-0.25ex}]  & {\textup{Sh}(\int P, J_P)} \arrow[u, "(i_{J_P})_{\ast}"', hook, xshift=1ex] \ar[l, "L_P^J", yshift=-1ex]
	\end{tikzcd}
	\end{equation}
\end{itemize}
\end{prop}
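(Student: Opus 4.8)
The plan is to establish the equivalence in two stages, reducing the sheaf-theoretic statement to a classical presheaf computation together with the behaviour of geometric inclusions under slicing. \emph{First}, I would treat the presheaf case, checking that $R_P$ and $L_P$ are mutually pseudo-inverse. Here the only points to verify are that $(X,p)\mapsto h_X\inv(p)$ is functorial on $(\int P)\op$ — a morphism $g:(Y,q)\to(X,p)$ is an arrow $g:Y\to X$ with $P(g)(p)=q$, and $H(g)$ restricts to a map $h_X\inv(p)\to h_Y\inv(q)$ — that $L_P$ reassembles the fibres of a presheaf $W$ on $\int P$ as $\bigsqcup_{p\in P(X)}W(X,p)$ with the evident projection to $P$, and that the two round trips are naturally isomorphic to the identities. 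These computations are routine and fix the stated formulas for $R_P$ and $L_P$.

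\emph{Second}, I would pass to sheaves by slicing the canonical inclusion. Applying the slicing of geometric morphisms recalled before Proposition \ref{prop.sottotopos} to the inclusion $i:\textup{Sh}(\cbicat,J)\hookrightarrow[\cbicat\op,\Set]$ (whose inverse image is $\sheafify$, so that $i^{\ast}(P)=\sheafify(P)$) yields a \emph{geometric inclusion} $i_P:\textup{Sh}(\cbicat,J)/\sheafify(P)\hookrightarrow[\cbicat\op,\Set]/P$, whose direct image sends $[k:K\to\sheafify(P)]$ to the pullback of $k$ along the unit $\eta_P:P\to\sheafify(P)$. Transporting this subtopos across the equivalence $R_P$ exhibits $\textup{Sh}(\cbicat,J)/\sheafify(P)$ as a subtopos of $[(\int P)\op,\Set]$; since every subtopos of a presheaf topos is a topos of sheaves for a unique Grothendieck topology (see \cite{maclanemoerdijk}), it is of the form $\textup{Sh}(\int P,J_P')$ for a unique $J_P'$, and $R^J_P:=R_P\circ i_P$ and $L^J_P:=\sheafify_P\circ L_P\circ(i_{J_P})_{\ast}$ are the resulting equivalence and its pseudo-inverse. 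It remains to identify $J_P'$ with the topology $J_P$ described in the statement.

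This identification is the heart of the argument, and the step I expect to be the main obstacle. I would first note that $\pi_P:\int P\to\cbicat$ is a \emph{discrete fibration}, so precomposition gives a bijection between sieves on $(X,p)$ in $\int P$ and sieves on $X$ in $\cbicat$; this makes the prescription ``$S$ covers $(X,p)$ iff $\pi_P(S)$ is $J$-covering'' visibly a Grothendieck topology $J_P$. To match it with $J_P'$ I would compare sheaf conditions fibrewise. Using the description of $i_P$ above, one computes $R_P(i_P[k])(X,p)=\{x\in K(X)\mid k_X(x)=(\eta_P)_X(p)\}$, which matches the stated formula for $R^J_P$; a matching family for this presheaf along the $J_P$-cover of $(X,p)$ attached to a $J$-cover $R$ of $X$ is then exactly a compatible family of elements of $K$ lying over the restrictions of $p$, and its amalgamation in the sheaf $K$ lands in the fibre over $p$ precisely because $k_X(\text{amalgamation})$ and $(\eta_P)_X(p)$ agree after restriction along $R$ and $\sheafify(P)$ is separated (here naturality of $\eta_P$ is used). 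This shows $R^J_P[k]$ is a $J_P$-sheaf and, read backwards, that the $J_P$-sheaves are exactly the objects of the transported subtopos, so $J_P'=J_P$.

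Finally, I would record the explicit pseudo-inverse and the commutativity of diagram (\ref{isoslice}). Being part of an equivalence, $L^J_P$ preserves colimits, and every object $W$ of $\textup{Sh}(\int P,J_P)$ is canonically the colimit over $\int W$ of the representable sheaves at its elements; applying $L^J_P$, which sends the representable at $(X,p)$ to $[\sheafify(\tilde p):\sheafify(\yo(X))\to\sheafify(P)]$, yields the displayed colimit formula with structural map induced by the $\sheafify(\tilde p)$. The commutativity of the two squares in (\ref{isoslice}) then reduces to uniqueness of adjoints: the lower horizontal equivalences are defined as $R_P\circ i_P$ and $\sheafify_P\circ L_P\circ(i_{J_P})_{\ast}$, so the square of right adjoints commutes by construction, and the square of left adjoints commutes automatically.
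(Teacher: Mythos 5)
Your overall architecture is sound and is genuinely more self-contained than the paper's own proof, which simply invokes \cite[Section 5.7]{caramello.denseness} for the fact that the presheaf equivalence $L_P\dashv R_P$ restricts along the inclusions $i_P$ and $i_{J_P}$. But your self-contained argument has a genuine gap exactly at the step you yourself identified as the heart of the matter: the identification $J_P'=J_P$. Your matching-family computation is correct, but it only proves \emph{one} inclusion. Showing that every $R_P(i_P[k])$ is a $J_P$-sheaf shows that the transported subtopos $\textup{Sh}(\int P, J_P')$ is contained in $\textup{Sh}(\int P, J_P)$, i.e.\ that $J_P\subseteq J_P'$. The phrase ``read backwards'' does not produce the converse: nothing in that computation tells you that an \emph{arbitrary} $J_P$-sheaf $W$ is, up to isomorphism, of the form $R_P(i_P[k])$. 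Since containments of subtoposes can be strict, one inclusion of topologies does not force equality, and the missing inclusion is precisely the substantive content of the result you are trying to reprove.

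Concretely, what is missing is the following statement: for every $J_P$-sheaf $W$, the unit of $\sheafify_P\dashv i_P$ at $L_P(W)$ is an isomorphism, i.e.\ the canonical map from $L_P(W)$ to the pullback of $\sheafify(L_P(W))\to\sheafify(P)$ along $\eta_P$ is invertible. This can be proved fibrewise by a plus-construction/locally-matching-families argument (an element of $\sheafify(L_P(W))(X)$ lying over $(\eta_P)_X(p)$ is represented, after refinement, by a matching family for $W$ along a $J_P$-cover of $(X,p)$, which amalgamates uniquely since $W$ is a $J_P$-sheaf), but it is a real argument, not a reversal of the one you gave. Alternatively, you could avoid comparing sheaf categories altogether and compare topologies directly: for a subtopos of a presheaf topos, a sieve $S$ on $(X,p)$ is covering for the associated topology if and only if the reflector inverts $S\rightarrowtail\yo(X,p)$; transporting along $R_P$, this happens if and only if $\sheafify_P$ inverts $L_P(S)\rightarrowtail L_P(\yo(X,p))$. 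Since $L_P(\yo(X,p))\cong[\tilde p:\yo(X)\to P]$ and $L_P(S)$ corresponds to the sieve $\pi_P(S)\subseteq\yo(X)$, this is equivalent to $\sheafify(\pi_P(S)\rightarrowtail\yo(X))$ being an isomorphism, i.e.\ to $\pi_P(S)\in J(X)$ --- which gives $J_P'=J_P$ in one stroke. Either repair would complete your proof; as written, the equality of topologies is asserted rather than proved.
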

\begin{proof} 
	Let $i:\textup{Sh}({\cal C}, J)\hookrightarrow [{\cal C}^{\textup{op}}, \Set]$ be the canonical geometric inclusion and $\sheafify$ the associated sheaf functor $[{\cal C}^{\textup{op}}, \Set] \to \textup{Sh}({\cal C}, J)$.
	By \cite[Section 5.7]{caramello.denseness}, the geometric equivalence $[\cbicat\op,\Set]/P\simeq [\left(\int P\right)\op,\Set]$ given by the functors $L_P \dashv R_{P}$ restricts, along the geometric inclusions $$i_{P}:\textup{Sh}({\cal C}, J)\slash a_{J}(P)\hookrightarrow [{\cal C}^{\textup{op}}, \Set]\slash P$$ and $$i_{J_{P}}:\textup{Sh}(\textstyle \int P, J_P)\hookrightarrow [\left(\textstyle \int P\right)\op,\Set],$$ to a geometric equivalence $L^J_P \dashv R^J_{P}$. From this it follows at once that $R^J_P$ acts as $R_P\circ i_P$, and that $L_P^J= \sheafify_P \circ L_P \circ (i_{J_P})_{\ast}$. 
\end{proof}

\begin{rmk}\label{rmkrestrictiontosheaves}
	It can be easily checked that if $P$ is a $J$-sheaf then the functor $L_{P}:[\left(\int P\right)\op,\Set] \to [\cbicat\op,\Set]/P$ takes values in $J$-sheaves (Exercise III.8(b) \cite{maclanemoerdijk}, whence the functor $L_{P}^{J}$ is the restriction of $L_{P}$ (along the canonical inclusions ${\textup{Sh}(\int P, J_P)}\hookrightarrow [\left(\int P\right)\op,\Set]$ and $\textup{Sh}({\cal C}, J)/P \hookrightarrow [\cbicat\op,\Set]/P$). Of course, also $R^J_{P}$ is the restriction of $R_P$. 
\end{rmk}

We recall that a comorphism of sites $(\cbicat, J)\to (\dbicat, K)$ \cite[p. 574]{elephant} is a functor $F:\cbicat\rightarrow\dbicat$ satisfying the \emph{covering-lifting property}, \ie such that for every sieve $R\in K(F(X))$ there exists some sieve $S\in J(X)$ such that $F(S)\subseteq R$. Comorphisms of sites are precisely those functors $F$ such that the geometric morphism $-\circ F\op\dashv \ran_{F\op}:[\cbicat\op,\Set] \rightarrow [\dbicat\op,\Set]$ restricts to sheaves: in particular, there is a geometric morphism $C(F):\textup{Sh}(\cbicat,J) \rightarrow \textup{Sh}(\dbicat,K) $ such that $i_K \circ C(F)_*=\ran_{F\op} \circ i_J$. Its inverse image is $C(F)^*=\sheafify_J \circ  (-\circ F\op)\circ i_K$.

Now, any $f:P\rightarrow Q$ in $[\cbicat\op,\Set]$ induces a functor $\int f:\int P\rightarrow \int Q$ acting on objects as $({\int f})(X,p)=(X, f_X(p))$ and on arrows as $({\int f})(\phi)=\phi$. The functor $\int f$ is a comorphism of sites $(\int P, J_P)\to (\int Q, J_Q)$. Indeed, consider $(X,p)\in \int P$ and $R\in J_Q((X,f_X(p)))$. Since $\pi_{Q}(R)$ is a $J$-covering sieve, there is a $J_P$-covering sieve on $(X,p)$ formed by arrows $\phi:(Y, P(\phi)(p))\rightarrow (X,p)$ where $\phi\in \pi_{Q}(R)$, and its image through $\int f$ is precisely $R$. Therefore, we have a commutative diagram of direct image functors:
\begin{equation}\label{comorphism}
	\begin{tikzcd}[column sep=large]
	{[\left(\int P\right)\op,\Set]} \arrow[r, "\ran_{(\int f)\op}"]           & {[\left(\int Q\right)\op,\Set]}          \\
	{\textup{Sh}(\int P, J_P)} \arrow[r, "C(\int f)_{\ast}"] \arrow[u, "(i_{J_P})_{\ast}", hook] & {\textup{Sh}(\int Q, J_Q)} \arrow[u, "(i_{J_Q})_{\ast}"', hook]
	\end{tikzcd}
\end{equation}
This square is actually induced by the square 
\begin{center}
	\begin{tikzcd}
	(\int P, T_{P}) \ar[r, "\int f"] & (\int Q, T_{Q})\\
	(\int P, J_P) \ar[u, "1_{\int P}"] \ar[r, "\int f"] & (\int Q, J_Q) \ar[u, "1_{\int P}"']
	\end{tikzcd}
\end{center}
of comorphisms of sites, where $T_{P}$ (resp. $T_{Q}$) is the trivial Grothendieck topology on $\int P$ (resp. $\int Q$). 

\begin{thm}\label{dp.groth}
	The direct image functor $C(\int\hspace{-0.1em} f)_{\ast}:\textup{Sh}(\int\hspace{-0.2em} P, J_P)\rightarrow \textup{Sh}(\int\hspace{-0.1em} Q, J_Q)$ of the geometric morphism $C(\int f)$ corresponds to the dependent product $ \prod_{\sheafify(f)}:\textup{Sh}(\cbicat,J)/\sheafify(P)\rightarrow \textup{Sh}(\cbicat,J)/\sheafify(Q)$ via the equivalences of Proposition \ref{prop.sitoperslice}:
	\begin{equation*}
	\begin{tikzcd}
	\textup{Sh}(\int P, J_P) \arrow[r, "C(\int f)_{\ast}"] \ar[d, "L_P^J", xshift=1ex] \ar[d, phantom, "\sim"{xshift=+0.25ex, rotate=90}]  & \textup{Sh}(\int Q, J_Q)  \ar[d, "L_Q^J", xshift=1ex] \ar[d, phantom, "\sim"{xshift=+0.25ex, rotate=90}]  \\
	{\textup{Sh}(\cbicat, J)/\sheafify(P)} \arrow[u, "R_P^J", xshift=-1ex] \arrow[r, "\prod_{\sheafify(f)}"]  & {\textup{Sh}(\cbicat, J)/\sheafify(Q)} \ar[u, "R_Q^J", xshift=-1ex]
	\end{tikzcd}
	\end{equation*}
\end{thm}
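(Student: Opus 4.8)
The plan is to reduce the asserted commutativity of the square of direct images to that of the corresponding square of inverse images, and then to verify the latter by a direct computation using the explicit formulas of Proposition~\ref{prop.sitoperslice}. Indeed, $\prod_{\sheafify(f)}$ is by definition right adjoint to the pullback functor $\sheafify(f)^*$, while $C(\int f)_*$ is right adjoint to $C(\int f)^*$; and since $L_P^J\dashv R_P^J$ and $L_Q^J\dashv R_Q^J$ are adjoint equivalences (so that also $R_P^J\dashv L_P^J$ and $R_Q^J\dashv L_Q^J$), the uniqueness of adjoints up to isomorphism shows that the sought isomorphism
$$L_Q^J\circ C(\int f)_*\cong \prod_{\sheafify(f)}\circ L_P^J$$
holds if and only if the square of left adjoints
$$R_P^J\circ \sheafify(f)^*\cong C(\int f)^*\circ R_Q^J$$
commutes. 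Everything is thereby reduced to establishing this last isomorphism of inverse image functors.

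To check it I would evaluate both composites on an arbitrary object $[w:W\to\sheafify(Q)]$ of $\textup{Sh}(\cbicat,J)/\sheafify(Q)$, computing the resulting sheaves on a generic $(X,p)\in\int P$. On the left, $\sheafify(f)^*[w]$ is the pullback of $w$ along $\sheafify(f):\sheafify(P)\to\sheafify(Q)$; applying the description of $R_P^J$ and using the naturality of the unit, which gives $\sheafify(f)\circ\eta_P=\eta_Q\circ f$, the fiber over $(\eta_P)_X(p)$ becomes
$$R_P^J(\sheafify(f)^*[w])(X,p)\cong\{x\in W(X)\ |\ w_X(x)=(\eta_Q)_X(f_X(p))\}.$$
On the right, recall from the definition of a comorphism of sites that $C(\int f)^*=\sheafify_{J_P}\circ(-\circ(\int f)\op)\circ i_{J_Q}$; since $(\int f)(X,p)=(X,f_X(p))$, precomposing $R_Q^J[w]$ with $(\int f)\op$ produces exactly the presheaf $(X,p)\mapsto\{x\in W(X)\ |\ w_X(x)=(\eta_Q)_X(f_X(p))\}$ on $\int P$, i.e. the very presheaf just obtained (a routine check shows the two also agree on morphisms).

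The only delicate point — and the step I expect to be the main obstacle — is the associated-sheaf functor $\sheafify_{J_P}$ built into $C(\int f)^*$: a priori $C(\int f)^*\circ R_Q^J$ returns a sheafified object whereas $R_P^J\circ\sheafify(f)^*$ returns a value of $R_P^J$ directly, so one must argue the two coincide. This is resolved by the observation that the common presheaf found above is already a $J_P$-sheaf, being equal to $R_P^J(\sheafify(f)^*[w])\in\textup{Sh}(\int P,J_P)$; hence $\sheafify_{J_P}$ acts on it as the identity and no genuine sheafification takes place. With the inverse image square thus established, the theorem follows from the reduction of the first paragraph, the naturality of the resulting isomorphism being automatic since every functor in sight is defined by a universal property.
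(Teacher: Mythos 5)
Your proof is correct, but it takes a genuinely different route from the paper's. Both arguments exploit uniqueness of adjoints (via the adjoint equivalences $L^J_P\dashv R^J_P$, $L^J_Q\dashv R^J_Q$) to trade the square of direct images for the corresponding square of inverse images; the difference lies in where the actual verification takes place. The paper works at the \emph{presheaf} level: it computes $(f^*\circ L_Q)(W)(X)\cong\bigsqcup_{p\in P(X)}W(X,f_X(p))\cong (L_P\circ(-\circ(\textstyle\int f)\op))(W)(X)$, deduces by adjunction the presheaf identity $R_Q\circ\prod_f=\ran_{(\int f)\op}\circ R_P$ (Equation (\ref{dp.psh})), and then transfers this to sheaves by a cube argument whose side faces are the restriction squares (\ref{isoslice}) and (\ref{comorphism}) together with Proposition \ref{prop.sottotopos} on the compatibility of dependent products with subtopos inclusions; in this way no sheafification ever has to be confronted, since every functor in the cube is a direct image or a restriction of one. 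You instead stay entirely at the \emph{sheaf} level: you reduce to the inverse-image square $R_P^J\circ\sheafify(f)^*\cong C(\int f)^*\circ R_Q^J$ and verify it by explicit computation with the formulas of Proposition \ref{prop.sitoperslice}, using naturality of the unit (\ie $\sheafify(f)\circ\eta_P=\eta_Q\circ f$) and disposing of the sheafification $\sheafify_{J_P}$ hidden inside $C(\int f)^*$ by the observation that the presheaf produced by $-\circ(\int f)\op$ is already a $J_P$-sheaf, being isomorphic to a value of $R_P^J$. Your approach buys self-containedness---it needs neither the presheaf case nor Proposition \ref{prop.sottotopos}---at the price of a hands-on computation and of the sheafification subtlety, which you identify and handle correctly; the paper's approach buys a computation-free transfer argument at the price of invoking its earlier structural results. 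One small presentational point: your closing claim that naturality in $[w]$ is ``automatic'' deserves a sentence rather than an appeal to universal properties---the isomorphisms you construct are induced by pullback projections and the unit, hence are visibly natural in $[w]$---but this is a matter of exposition, not a mathematical gap.
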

\begin{proof}
Let us first consider the presheaf case. For any $W:\left(\int Q\right)\op\rightarrow\Set$, $(f^*\circ L_Q)(W)(X)\cong \bigsqcup _{p\in P(X)} W(X,f_X(p))$ $=L_P \circ (-\circ(\int f)\op)(W)(X)$, naturally in $X\in {\cal C}$. Therefore $L_P\circ (-\circ (\int f)\op)=f^* \circ L_Q$ and so 
\begin{equation}\label{dp.psh}
R_Q \circ \textstyle\prod_f=\ran_{(\int f)\op} \circ R_P.
\end{equation} 

The more general case amounts precisely to the commutativity of the front square in the following cube:
\begin{center}
	\begin{tikzcd}[row sep=large, column sep=small]
	& {[\left(\int\hspace{-0.2ex} P\right)\op\hspace{-0.2ex},\Set]} \arrow[dd, "R_P"{yshift=4ex}, leftarrow] \arrow[rr, "\ran_{(\int\hspace{-0.2ex} f)\op}"] &                                                                             & {[\left(\int\hspace{-0.2ex} Q\right)\op\hspace{-0.2ex},\Set]} \arrow[dd, "R_Q"{yshift=4ex}, leftarrow] \\
	{\textup{Sh}(\int\hspace{-0.2ex} P, J_P)} \arrow[dd, "R_P^J"{yshift=2.5ex, xshift=-3.8ex}, leftarrow] \arrow[rr, "C(\int\hspace{-0.2ex} f)_*"{xshift=-4ex}, crossing over] \arrow[ru, "(i_{J_P})_*", hook] &                                                                                             & {\textup{Sh}(\int\hspace{-0.2ex} Q, J_Q)}  \arrow[ru, "(i_{J_Q})_*", hook] &                                                            \\
	& {[\cbicat\op,\Set]/P} \arrow[rr, "\prod_f"{below, xshift=-5ex}]                                                &                                                                             & {[\cbicat\op,\Set]/Q}                                      \\
	{\textup{Sh}(\cbicat,J)/\sheafify(P)} \arrow[ru, "i_P"', hook] \arrow[rr, "\prod_{\sheafify(f)}"']             &                                                                                             & {\textup{Sh}(\cbicat,J)/\sheafify(Q)} \arrow[ru, "i_Q"', hook]        \arrow[uu, "R_Q^J"{yshift =2.5ex}, crossing over]               &                                                           
	\end{tikzcd}
\end{center} 
This in turn follows from Equation (\ref{dp.psh}), Square (\ref{isoslice}), Square (\ref{comorphism}) and Proposition \ref{prop.sottotopos}.
\end{proof}

\begin{cor}\label{prop.ranpi}
	The direct image $C(\pi_{P})_*:\textup{Sh}(\int P, J_P)\rightarrow \textup{Sh}(\cbicat, J)$ of the geometric morphism $C(\pi_{P})$ corresponds to the dependent product functor $\prod_{\sheafify(P)}:\textup{Sh}(\cbicat, J)/\sheafify(P)\rightarrow \textup{Sh}(\cbicat, J)$ under the equivalence of Proposition \ref{prop.sitoperslice}.	
\end{cor}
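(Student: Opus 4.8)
The plan is to derive this corollary as the special case $Q = 1$ of Theorem \ref{dp.groth}, where $1$ denotes the terminal presheaf, once the relevant objects have been identified. First I would observe that the category of elements $\int 1$ has as objects the pairs $(X, \ast)$ with $\ast$ the unique element of $1(X)$, and as morphisms $(Y, \ast) \to (X, \ast)$ simply the arrows $g : Y \to X$ of $\cbicat$; hence the projection $\pi_1 : \int 1 \to \cbicat$ is an isomorphism of categories, under which the topology $J_1$ (whose sieves are those sent to $J$-covering sieves by $\pi_1$) coincides with $J$. In particular $\textup{Sh}(\int 1, J_1) \simeq \textup{Sh}(\cbicat, J)$. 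Next, for the unique arrow $!_P : P \to 1$ the induced functor $\int !_P : \int P \to \int 1$ sends $(X, p)$ to $(X, (!_P)_X(p)) = (X, \ast)$ and is the identity on arrows, so under the identification $\int 1 \cong \cbicat$ it is precisely $\pi_P$; thus $C(\int !_P) = C(\pi_P)$.

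On the sheaf side, the terminal presheaf is already a $J$-sheaf, whence $\sheafify(1) \cong 1$ and $\textup{Sh}(\cbicat, J)/\sheafify(1) \simeq \textup{Sh}(\cbicat, J)$. I would then check that under all these identifications the equivalences $R_1^J$ and $L_1^J$ of Proposition \ref{prop.sitoperslice} reduce to this canonical equivalence: feeding $Q = 1$ into the formula $R_Q^J[w](X, q) \cong \{ x \in W(X) \mid w_X(x) = (\eta_Q)_X(q) \}$ gives $R_1^J[w](X, \ast) \cong W(X)$, so $R_1^J[w] \cong W$. Finally, following the convention $\prod_{\sheafify(P)} = \prod_{!_{\sheafify(P)}}$, and noting that $\sheafify(!_P) = !_{\sheafify(P)}$ since $\sheafify(1) \cong 1$, we have $\prod_{\sheafify(!_P)} = \prod_{\sheafify(P)}$.

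Substituting $Q = 1$ into the commutative square of Theorem \ref{dp.groth} and applying these identifications turns $C(\int !_P)_{\ast}$ into $C(\pi_P)_{\ast}$, the functor $\prod_{\sheafify(!_P)}$ into $\prod_{\sheafify(P)}$, and the right-hand vertical equivalences into the canonical equivalence $\textup{Sh}(\cbicat, J)/\sheafify(1) \simeq \textup{Sh}(\cbicat, J)$, which is exactly the asserted correspondence. Since the statement is a direct specialization of the theorem, there is no genuine obstacle; the only points requiring a moment's care are verifying that $J_1 = J$ under $\pi_1$ and that $R_1^J$ (equivalently $L_1^J$) is, up to the canonical equivalence, the identity — both entirely routine.
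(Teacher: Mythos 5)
Your proposal is correct and follows exactly the paper's own route: the paper likewise deduces the corollary from Theorem \ref{dp.groth} by taking $f = {!_P} : P \to 1$, noting that $\int 1 \simeq \cbicat$ and $\int {!_P} \simeq \pi_P$. The additional verifications you spell out (that $J_1$ corresponds to $J$ and that $R_1^J$, $L_1^J$ reduce to the canonical equivalence $\textup{Sh}(\cbicat, J)/\sheafify(1) \simeq \textup{Sh}(\cbicat, J)$) are left implicit in the paper but are indeed the routine details underlying its one-line proof.
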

\begin{proof}
This follows from Theorem \ref{dp.groth} by taking $f$ equal to the arrow $!_{P}:P\to 1_{\textup{Sh}(\cbicat, J)}$ (notice that $\int (1_{\textup{Sh}(\cbicat, J)}:{\cal C}^{\textup{op}} \to \Set ) \simeq \cbicat$ and $\int !_{P}\simeq \pi_{P}$).
\end{proof}

\begin{rmk}
We can derive from Theorem \ref{dp.groth} a further description of $L_P^J$. Consider the unit arrow $\eta_{P}:P\rightarrow \sheafify P$. Then $C(\textstyle\int \eta_{P})_* \circ  R^J_P=R^J_{\sheafify P}\circ \prod_{\sheafify (\eta_{P})}$. But since $\sheafify(\eta_{P})$ is an isomorphism $\prod_{\sheafify(\eta_{P})}$ is an equivalence, so, up to isomorphism, $C(\textstyle\int \eta_{P})_* \circ R_P^J=R_{\sheafify P}^J$ and hence $L_P^J=L_{\sheafify P}^J \circ  C(\textstyle\int \eta_{P})_*$. Now, since $\sheafify P$ is a $J$-sheaf, $L_{\sheafify P}^J$ is the restriction of $L_{\sheafify P}$ (cf. Remark \ref{rmkrestrictiontosheaves}), while $C(\textstyle\int \eta_{P})_*$, as we know, is the restriction of $\ran_{\left(\int\eta_{P}\right)\op}$. Thus for $W:\left(\int P\right)\op\rightarrow\Set$ a $J_P$-sheaf and $X\in\cbicat$, 
$$L_P^J(W)(X)=(L_{\sheafify P}^J \circ C(\textstyle\int \eta_{P})_*)(W)(X)=\displaystyle\coprod_{x\in \sheafify P(X)} \lim_{\substack{\phi:Y\rightarrow X\\ p\in (\eta_{P})_Y\inv(\sheafify P(\phi)(x))} } W(Y,p).$$
\end{rmk}

The following corollary of Theorem \ref{dp.groth} provides an explicit description of the dependent product in the Grothendieck case.

\begin{cor}\label{corexplicitdesc}
	Let $({\cal C}, J)$ be an essentially small site and $f:P\to Q$ an arrow in $[{\cal C}^\textup{op}, \Set]$. Then  $$\textstyle\prod_{\sheafify(f)}=L^J_Q\circ C(\int f)_{\ast}\circ R^{J}_{P}\cong \sheafify_{Q}\circ L_{Q}\circ \ran_{(\int f)\op}\circ R_{P}\circ i_{P}\cong  \sheafify_{Q}\circ \textstyle\prod^{\textup{pr}}_{f} \circ i_{P},$$ where $\textstyle\prod^{\textup{pr}}_{f}:[{\cal C}^{\textup{op}}, \Set]/P \to [{\cal C}^{\textup{op}}, \Set]/Q$ is the dependent product along $f$ in the presheaf topos $[{\cal C}^{\textup{op}}, \Set]$.
	
	In particular, if $P$ and $Q$ are $J$-sheaves, then for any arrow $h:H\to P$ in $\textup{Sh}({\cal C}, J)$ and object $X$ in ${\cal C}$, we have
	\begin{equation*}\label{depprodgroth}
	\textstyle\prod_f[h](X)=\{(q,\underline{x})\ |\ q\in Q(X),\ \underline{x}\in A_{f}^{h}(X)  \}
	\end{equation*}
	(with the canonical projection to $Q$), where $A_{f}^{h}(X)$ is the set
	$$\{ \underline{x}\in\hspace{-1.7ex}\prod_{\substack{g:Y\rightarrow X\\ p\in f_Y\inv(Q(g)(q))}}\hspace{-1.7ex} h_Y\inv(p)\ |\ \forall \gamma\mbox{ s.t. }\cod(\gamma)=\dom(g),\ H(\gamma)(x_{g,p})=x_{g\circ \gamma, P(\gamma)(p)} \}$$
\end{cor}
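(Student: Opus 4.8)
The plan is to read off the explicit formula from the composite $\prod_{\sheafify(f)}=L^J_Q\circ C(\int f)_{\ast}\circ R^{J}_{P}$ established in Theorem \ref{dp.groth}, so the corollary is essentially a matter of unwinding each of the three functors on a test object. First I would establish the chain of isomorphisms in the first displayed line: the identity $\prod_{\sheafify(f)}=L^J_Q\circ C(\int f)_{\ast}\circ R^{J}_{P}$ is immediate from Theorem \ref{dp.groth} (it is exactly the clockwise composite around the square there), and then I would expand each sheaf-level functor into its presheaf-level counterpart. Using Proposition \ref{prop.sitoperslice}, $R^J_P=R_P\circ i_P$ and $L^J_Q=\sheafify_Q\circ L_Q\circ (i_{J_Q})_\ast$; using the discussion preceding Theorem \ref{dp.groth}, $C(\int f)_\ast$ is the restriction of $\ran_{(\int f)\op}$ to sheaves, so $(i_{J_Q})_\ast\circ C(\int f)_\ast=\ran_{(\int f)\op}\circ (i_{J_P})_\ast$. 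Substituting and cancelling $(i_{J_P})_\ast$ against the implicit inclusion yields $\sheafify_Q\circ L_Q\circ \ran_{(\int f)\op}\circ R_P\circ i_P$. Finally, Equation (\ref{dp.psh}) in the form $R_Q\circ \prod^{\textup{pr}}_f=\ran_{(\int f)\op}\circ R_P$ (together with $L_Q\circ R_Q\cong \id$, since $L_Q\dashv R_Q$ is an equivalence) lets me replace $L_Q\circ \ran_{(\int f)\op}\circ R_P$ by $\prod^{\textup{pr}}_f$, giving the last isomorphism $\sheafify_Q\circ \prod^{\textup{pr}}_f\circ i_P$.

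For the second, more concrete assertion I would specialize to the case where $P$ and $Q$ are already $J$-sheaves. The point of this hypothesis is Remark \ref{rmkrestrictiontosheaves}: when $Q$ is a sheaf, $L_Q$ already lands in sheaves, so the sheafification $\sheafify_Q$ in the composite can be dropped and $L^J_Q$ is simply the restriction of $L_Q$; likewise $R^J_P$ is the restriction of $R_P$. Hence for $h:H\to P$ a sheaf object I only need to evaluate the \emph{presheaf} composite $L_Q\circ \ran_{(\int f)\op}\circ R_P$ at $[h]$ and at an object $X\in\cbicat$. The plan is: apply $R_P$ to obtain the presheaf on $\int P$ given by $R_P[h](Y,p)=h_Y\inv(p)$; apply the right Kan extension along $(\int f)\op$, which by the pointwise formula computes the value at $(X,q)\in\int Q$ as a limit over the comma category of $(\int f)\op$ under $(X,q)$; and finally apply $L_Q$, whose value at $X$ is the coproduct $\bigsqcup_{q\in Q(X)}\bigl(\ran_{(\int f)\op}R_P[h]\bigr)(X,q)$, with structural map the projection to $Q$. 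This reproduces the outer shape $\{(q,\underline{x})\mid q\in Q(X),\ \underline{x}\in A^h_f(X)\}$.

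The main obstacle—and the step deserving genuine care rather than a one-line appeal—is identifying the pointwise right Kan extension $\bigl(\ran_{(\int f)\op}R_P[h]\bigr)(X,q)$ explicitly as the set $A^h_f(X)$. I would compute the comma category indexing the limit: its objects are pairs $(g:Y\to X,\ p\in f_Y\inv(Q(g)(q)))$, i.e. morphisms $(\int f)(Y,p)\to (X,q)$ in $\int Q$, which are exactly arrows $g$ in $\cbicat$ with $f_Y(p)=Q(g)(q)$; and its morphisms $(g,p)\to(g',p')$ are arrows $\gamma$ with $g'=g\circ\gamma$ and $p'=P(\gamma)(p)$. The limit of $R_P[h]$ over this category is therefore a family $\underline{x}=(x_{g,p})$ with $x_{g,p}\in h_Y\inv(p)$ subject to the compatibility $H(\gamma)(x_{g,p})=x_{g\circ\gamma,\,P(\gamma)(p)}$ imposed by each connecting morphism $\gamma$—which is precisely the defining condition of $A^h_f(X)$. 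The care needed here is purely bookkeeping: tracking the variance (everything is in $(\int P)\op$ and $(\int Q)\op$, so the limit formula for a right Kan extension reads off correctly), checking that the restriction map of the presheaf $R_P[h]$ along $\gamma$ is indeed $H(\gamma)$ suitably corestricted to the fibre, and confirming that the structural morphism of $L_Q$ matches the projection to $Q$ in the final formula. Once the indexing category is pinned down, the stated description of $A^h_f(X)$ falls out directly, and naturality in $X$ is automatic since every functor in the composite is.
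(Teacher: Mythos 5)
Your proposal is correct and follows essentially the same route as the paper: the first chain of isomorphisms is obtained by unwinding Theorem \ref{dp.groth} together with Proposition \ref{prop.sitoperslice}, Square (\ref{comorphism}) and Equation (\ref{dp.psh}), and the explicit formula comes from the pointwise limit formula for $\ran_{(\int f)\op}$ evaluated over the comma category $((X,q)\downarrow(\int f)\op)$, whose objects and morphisms you identify exactly as the paper does. The only difference is one of detail: the paper treats the first statement as immediate from Theorem \ref{dp.groth}, whereas you spell out the intermediate cancellations.
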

	
\begin{proof}
The first statement of the corollary follows immediately from Theorem \ref{dp.groth}. 

The right Kan extension $\ran_{(\int f)\op}(R^J_P[h])$ can be computed (see \eg \cite[Example A4.1.4]{elephant}) as 
\begin{equation*}
\ran_{(\int f)\op}(R^J_P[h])(X,q)\hspace{-0.1ex}=\hspace{-0.1ex}\lim\left(\hspace{-0.1ex}\left((X,q)\hspace{-0.2ex}\downarrow\hspace{-0.2ex}\left(\textstyle\int\hspace{-0.2ex} f\right)\op\right)\hspace{-0.1ex}\rightarrow\hspace{-0.1ex} \left(\textstyle\int\hspace{-0.2ex} P\right)\op\hspace{-0.2ex}\xrightarrow{R^J_P[h]}\Set\right):
\end{equation*}
the nodes of the diagram are the arrows $g:(Y,f_Y(p))\rightarrow(X,q)$ in $\int Q$, so they are in fact indexed by pairs $(g:Y\rightarrow X, p\in P(Y))$ such that $f_Y(p)=Q(g)(q)$; the edges of the diagram are the arrows $\gamma:Y' \rightarrow Y$ in $\cal C$ such that $g'=g\circ \gamma$ and $P(\gamma)(p)=p'$ (notice that $\gamma$ is actually an arrow $(Y', f_{Y'}(p'))\rightarrow(Y, f_Y(p))$ by the latter conditions and the naturality of $f$). Recalling that $R^J_P([h])(Y,p)=h_Y\inv(p)$ if $P$ is a $J$-sheaf (cf. Remark  \ref{rmkrestrictiontosheaves}), the above limit is the set $A_{f}^{h}(X)$ in the statement of the corollary.
Finally, the coproduct of all these fibers yields the desired expression of the dependent product $\prod_f[h]$.	
\end{proof}	

Notice that we would have obtained exactly the same description for $\textstyle\prod_f$ as that provided by the Corollary by applying the construction of Section 1. Indeed, once recalled that $\powerset(Z)(X)$ can be interpreted as $\textup{Sub}(\yo(X)\times Z)$ in $[{\cal C}^{\textup{op}}, \Set]$ (for any $Z$), an explicit computation shows that 
\begin{align*}
\textstyle\prod_f[h](X)\hspace{-0.3ex}=\hspace{-0.3ex}\{&(q,M)\hspace{-0.2ex}\in\hspace{-0.2ex} Q(X)\hspace{-0.3ex}\times\hspace{-0.3ex} \textup{Sub}(\yo(X)\hspace{-0.3ex}\times\hspace{-0.3ex} H\hspace{-0.3ex}\times\hspace{-0.3ex} P)\hspace{0.2em} |\hspace{0.2em}  \forall g\hspace{-0.2ex}:\hspace{-0.2ex}Y\hspace{-0.3ex}\rightarrow\hspace{-0.3ex} X,\,\forall p\hspace{-0.2ex}\in\hspace{-0.2ex} P(Y)\\
&\hspace{0.3em}\mbox{ if}\hspace{0.2em} f(Y)(p)=Q(g)(q)\mbox{ then }\exists! x\in H(Y)\mbox{ s.t.}\hspace{0.2em} (g,x,p)\in M(Y);\\
&\hspace{2.2em} (g,x,p)\in M(Y)\mbox{ iff } h(Y)(x)=p\mbox{ and } f(Y)(p)=Q(g)(q)\}
\end{align*}
The parallelism with \ref{depprodgroth} is evident: indeed, to the tuple $\underline{x}$ we can associate the subobject $M\rightarrowtail \yo(X)\times H\times P$ of the triples $(g, x_{g,p}, p)$ and viceversa.

\begin{rmk}
	Corollary \ref{corexplicitdesc} shows that, if $P$ and $Q$ are $J$-sheaves, the dependent product $\prod_f$ is computed regardless of the topological \emph{datum} of $J$. This is due to the fact that, in this case, $R^{J}_{P}$ and $L^{J}_{Q}$ are restrictions of $R_P$ and $L_Q$ respectively (cf. Remark \ref{rmkrestrictiontosheaves}).
\end{rmk}

\textbf{Acknowledgements:} The first author gratefully acknowledges MIUR for the support in the form of a ``Rita Levi Montalcini'' position and Laurent Lafforgue for his careful reading of a preliminary version of this text.

\vspace{1.5cm}

\textsc{Olivia Caramello and Riccardo Zanfa} 

{\small \textsc{Dipartimento di Scienza e Alta Tecnologia, Universit\`a degli Studi dell'Insubria, via Valleggio 11, 22100 Como, Italy.}\\
	\emph{E-mail addresses:} \texttt{olivia.caramello@uninsubria.it}; \texttt{rzanfa@uninsubria.it.}}

\end{document}